\title[Discrete time geometric PMP]{A Simple proof of the discrete time geometric Pontryagin maximum principle on smooth manifolds}
\thanks{The authors were supported in part by the grant 17ISROC001 from the Indian Space Research Organization.}
\author[M. Assif P K]{Mishal Assif P K}
\address{Department of Mechanical Engineering\\ IIT Bombay, Powai\\ Mumbai 400076, India.\\
\url{http://mishalassif.github.io}\\
}
\author[D. Chatterjee]{Debasish Chatterjee and Ravi Banavar}
\address{Systems \& Control Engineering\\ IIT Bombay, Powai\\ Mumbai 400076, India.\\
\url{http://www.sc.iitb.ac.in/~chatterjee}\\
\url{http://www.sc.iitb.ac.in/~banavar}\\
}
\email{\{mishal\_assif, dchatter, banavar\}@iitb.ac.in}
\keywords{optimal control, Pontryagin maximum principle, smooth manifolds}
\begin{document}

	\begin{abstract}
		We establish a geometric Pontryagin maximum principle for discrete time optimal control problems on finite dimensional smooth manifolds under the following three types of constraints: a) constraints on the states pointwise in time, b) constraints on the control actions pointwise in time, c) constraints on the frequency spectrum of the optimal control trajectories. Our proof follows, in spirit, the path to establish geometric versions of the Pontryagin maximum principle on smooth manifolds indicated in \cite{ref:Cha-11} in the context of continuous-time optimal control. 
    \end{abstract}

	\maketitle

	\section{Introduction}
		\label{sec:the intro}
		
The celebrated Pontryagin maximum principle (PMP) is a central tool in optimal control theory that provides first order necessary conditions for optimal controls. These necessary conditions can be used by algorithms to arrive at optimal control actions. The PMP was first introduced for continuous time control systems on $\R^n$ by Pontryagin and his students in \cite{ref:PonBolGamMis-62} and alternate proofs for the PMP later appeared in \cite{ref:Bol-71} and \cite{ref:LeeMar-67}. The discrete time Pontryagin maximum principle was developed primarily by Boltyanskii (see \cite{ref:Bol-75, ref:Bol-78} and the references therein) and discrete time is the setting of our current work.

While control systems evolving on $\R^n$ are the most common, systems with non-flat manifolds as configuration spaces also appear in a variety of engineering disciplines including robotics, quantum mechanical systems, and aerospace systems. Justifiably so, the continuous time PMP was extended to control systems evolving on smooth manifolds in a sequence of works from \cite{ref:Sus-98} through \cite{ref:AgrSac-04}; however the proofs given in these sources are quite complicated. The most recent proof of the geometric continuous time PMP appears in \cite{ref:Cha-11}; it deserves special mention because of its sheer simplicity. This work serves as a source of inspiration for our current article. Assuming the validity of the PMP on Euclidean spaces, in \cite{ref:Cha-11} the author derives the geometric version of the PMP by embedding the underlying manifold in a suitable Euclidean space and extending the optimal control problem on the manifold into an equivalent control problem on the Euclidean space, followed by appealing to the PMP on the Euclidean space, and finally translating the necessary conditions furnished by the PMP for the extended problem on the Euclidean space back to the manifold. This is the route that we follow here in the discrete time setting.

Almost all physical systems that are to be controlled naturally come with an array of constraints attached to them. In spite of this, there are few control techniques available that can actually compute constrained control actions in a tractable fashion. The continuous time PMP is no exception to this: numerical algorithms that seek to identify optimal controls from the necessary condition given by the PMP can handle control constraints rather efficiently. However, the necessary conditions given by the continuous time PMP for point-wise state constraints typically involve a measure, which is an infinite dimensional object, and numerical methods face grave difficulties in this setting. If one wants to include point-wise state constraints in the optimal control problem during the synthesis stage, it is better to perform some kind of discretization of the system first, and this is where the relevance of discrete time optimal control arises. A discrete time PMP on smooth manifolds can be employed by algorithms to solve state and control constrained control problems with relative ease.
 
In this article we address optimal control problems for discrete-time smooth control systems evolving on finite dimensional smooth manifolds in the presence of the following three important classes of constraints:
\begin{enumerate}[label=(\Roman*), leftmargin=*, widest=III, align=left]
	\item \label{constr:states} constraints on the states at each time instant,
	\item \label{constr:controlmag} constraints on the control magnitudes at each time instant, and
	\item \label{constr:controlfreq} constraints on the frequency of the control functions.
\end{enumerate}
We prove a discrete time PMP for control systems on smooth finite dimensional manifolds under the presence of the three classes of constraints of type mentioned above with the aid of three simple ingredients:
\begin{enumerate}[label=(Step \arabic*), leftmargin=*, widest=3, align=left]
	\item The Whitney embedding theorem, which is employed for embedding the smooth manifold in a suitable Euclidean space.
	\item A few basic extension theorems for smooth functions defined on embedded submanifolds, employed here to extend the original optimal control problem to the Euclidean space given by Step 1.
	\item The discrete time PMP on $\R^n$ under frequency constraints \cite{ref:ParCha-17}, employed to arrive at first order necessary conditions for optimality of the extended problem. 
\end{enumerate}

To our knowledge the only sources that discuss versions of the PMP for discrete time geometric optimal control problems are \cite{ref:PhoChaBan-18} and \cite{ref:KipGup-17}. The former establishes a PMP for a class of smooth control systems evolving on Lie groups under mild structural assumptions on the system dynamics. In contrast, in the present article we remove all such assumptions and present a neater version of the PMP with broader applicability using very different and simple tools. \cite{ref:KipGup-17} proves a PMP on smooth manifolds subject to the similar types of constraints that we consider here, but with the exception of the frequency constraints, and they do so under weaker assumptions on the smoothness of the cost, the constraints and the state transition maps. However the exposition in \cite{ref:KipGup-17} heavily relies on nontrivial tools of nonsmooth analysis, and is nowhere nearly as simple as the proof we present here. The frequency constraints treated in this article first appeared in \cite{ref:ParCha-17}, but the exposition there was limited to systems evolving on finite dimensional Euclidean spaces, as opposed to non-flat smooth manifolds.

	\section{Preliminaries}
		\label{sec:prelim}
We employ standard notation throughout the article: \(\Nz\) denotes the non-negative integers, \(\N\) the positive integers, \(\R\) the real numbers. If \(k\) is a positive integer, we let \([k] \Let \{1, \ldots, k\}\). The vector space \(\R^{\genDim}\) is always assumed to be equipped with the standard inner product \(\inprod{v}{v'} \Let v^\intercal v'\) for every \(v, v' \in \R^{\genDim}\), and we denote by $v^k$ the $k^{\text{th}}$ component of $v$. In the theorem statements, we use \(\dualSpace{\bigl(\R^{\genDim} \bigr)}\) to denote the dual space of \(\R^{\genDim}\) for the sake of precision; of course, \(\dualSpace{\bigl(\R^{\genDim}\bigr)}\) is isomorphic to \(\R^{\genDim}\) in view of the Riesz representation theorem. It is also assumed that $\R^{\genDim}$ is endowed with the standard partial order $\leq$; i.e., two vectors \(v, w \in \R^{\genDim}\) are related by \(v \leq w\) if and only if $v^i \leq w^i$ for all $i = 1, \ldots \genDim$.

	If $\M_1$ and $\M_2$ are smooth manifolds and $f:\M_1 \rightarrow \M_2$ is a smooth map, then $Tf : \TBundle{\M_1} \rightarrow \TBundle{\M_2}$ denotes the tangent lift of the map $f$ and $T^{*}f : \coTBundle{\M_2} \rightarrow \coTBundle{\M_1}$ denotes the cotangent lift of the map $f$. $Tf(x_0) : \TSpace{x_0}\M_1 \rightarrow \TSpace{f(x_0)}\M_2$ will denote the tangent lift of the map $f$ at $x_0$, and $T^{*}f(x_0) : \coTSpace{f(x_0)}\M_2 \rightarrow \coTSpace{x_0}\M_1$ will denote the cotangent lift of the map $f$ at $x_0$. Similarly, if $f: \M_1 \rightarrow \R$ is a smooth function, then $df : \M_1 \rightarrow \coTBundle{\M_1}$ will denote the differential of the function $f$. 
	
	In the rest of this section we shall define the basic concepts regarding convex sets and tents which appear later in the statement of the main result. For the sake of brevity, we will omit all proofs in this section.

	\begin{itemize}[label=\(\circ\), leftmargin=*]
		\item Let \(\genDim\) be a positive integer. Recall that a non-empty subset \(K \subset \R^{\genDim}\) is a \embf{cone} if for every \(y \in K\) and \(\alpha \ge 0\) we have \(\alpha y \in K\). In particular, \(0 \in \R^{\genDim}\) belongs to \(K\). A non-empty subset \(C \subset \R^{\genDim}\) is \embf{convex} if for every \(y, y' \in C\) and \(\theta \in [0, 1]\) we have \((1-\theta)y + \theta y' \in C\).
		\item A hyperplane \(\Gamma\) in \(\R^{\genDim}\) is an (\(\genDim -1\))-dimensional affine subset of \(\R^{\genDim}\). It can be viewed as the level set of a nontrivial linear function \(p : \R^{\genDim} \lra \R\). If \(p\) is given by \(p(\genVar) = \inprod{a}{\genVar}\) for some \(a (\neq 0) \in \R^{\genDim}\), then
			\[ \Gamma \Let \set[\big]{\genVar \in \R^{\genDim} \suchthat \inprod{a}{\genVar} = \alpha}.
			\]
		\item Let \(\OMG\) be a nonempty set in \(\R^{\genDim}\). By \(\affHull \OMG\) we denote the set of all affine combinations of points in \(\OMG\). That is,
			\[
				\affHull \OMG = \set[\bigg]{\sum_{i=1}^{k} \theta_{i} \genVar_{i} \suchthat \sum_{i=1}^{k} \theta_{i} = 1, \quad \genVar_{i} \in \OMG \quad \text{for } i = 1, \ldots, k, \text{ and } k \in \N}.
			\]
			In other words, \(\affHull \OMG\) is also the smallest affine set containing \(\OMG\). The relative interior \(\relInt \OMG\) of \(\OMG\) denotes the interior of \(\OMG\) relative to the affine space \(\affHull \OMG\).
		\item Let \(M\) be a convex set and \(\vertex \in M\). The union of all the rays emanating from \(\vertex\) and passing through points of \(M\) other than \(\vertex\) is a convex cone with vertex at \(\vertex\). The closure of this cone is called the \embf{supporting cone} of \(M\) at \(\vertex\).

	\end{itemize}

	We will now provide some definitions associated with the method of tents. Although we will not be directly using the method of tents in the proof of the main result, tents do appear in our final result, and so one needs to be familiar at least with the basic definition of what a tent is.
	\begin{definition}
		Let \(\OMG\) be a subset of \(\R^{\genDim}\) and let \(\vertex \in \OMG\). A convex cone \(Q \subset \R^{\genDim}\) with vertex \(\vertex\) is a \embf{tent} of \(\OMG\) at \(\vertex\) if there exists a smooth map \(\tentMap\) defined in a neighbourhood of \(\vertex\) such that:\footnote{The theory also works for \(\tentMap\) continuous.}
	\begin{enumerate}[leftmargin=*, align=left]
		\item \(\tentMap (\genVar) = \genVar + o  (\genVar - \vertex )\),\footnote{\label{fn:o-Notation} Recall the Landau notation \(\genFun (\genVar) = o (\genVar)\) that stands for a function \(\genFun(0) = 0\) and \(\lim_{\genVar \to 0} \frac{\abs{\genFun(\genVar)}}{\abs{\genVar}} = 0\).} and
		\item there exists \(\epsilon > 0\) such that \(\tentMap (\genVar) \in \OMG\) for \(\genVar \in Q \cap \ball{\vertex}\).
	\end{enumerate}
	\end{definition}

	We say that a convex cone \(\genTent \subset \R^{\genDim}\) with vertex at \(\vertex\) is a local tent of \(\OMG\) at \(\vertex\) if for every \(\genVar \in \relInt \genTent\) there is a convex cone \(Q \subset \genTent\) with vertex at \(\vertex\) such that \(Q\) is a tent of \(\OMG\) at \(\vertex\), \(\genVar \in \relInt Q\), and \(\affHull Q = \affHull \genTent\). Observe that if \(\genTent\) is a tent of \(\OMG\) at \(\vertex\), then \(\genTent\) is a local tent of \(\OMG\) at \(\vertex\).

	\par A tent to a set at a point is just a linear approximation of the set about the point. Intuitively, it is the set of directions along which it is possible to enter the set from the point. This intuition is reinforced through the following theorems which characterize the tents of some sets which appear commonly in applications. 

	\begin{theorem}[{{\cite[Theorem 8 on p.\ 11]{ref:Bol-75}}}]
	\label{th:tangent plane}
		Let \(\OMG\) be a smooth manifold in \(\R^{\genDim}\) and \(\genTent\) the tangent plane to \(\OMG\) at \(\vertex \in \OMG\). Then \(\genTent\) is a tent of \(\OMG\) at \(\vertex\).
	\end{theorem}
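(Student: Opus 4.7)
The plan is to construct the required smooth map $\tentMap$ by straightening the submanifold $\OMG$ near $\vertex$ through a local diffeomorphism of $\R^{\genDim}$ and then retracting back onto the straightened copy of $\OMG$ via a linear projection. Denote by $k$ the dimension of $\OMG$. The standard submanifold chart lemma provides a neighborhood $U$ of $\vertex$ in $\R^{\genDim}$, a neighborhood $V$ of $0$ in $\R^{\genDim}$, and a smooth diffeomorphism $\psi : U \to V$ with $\psi(\vertex) = 0$ and $\psi(\OMG \cap U) = V \cap \bigl(\R^{k} \times \{0\}^{\genDim - k}\bigr)$. Write $E \Let \R^{k} \times \{0\}^{\genDim - k}$. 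Since $\psi|_{\OMG \cap U}$ takes values in $E$, differentiating at $\vertex$ shows that $d\psi(\vertex)$ maps $\TSpace{\vertex}\OMG = \genTent - \vertex$ into $E$; as both spaces are $k$-dimensional and $d\psi(\vertex)$ is a linear isomorphism, this restriction is onto $E$.

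Let $\pi : \R^{\genDim} \to \R^{\genDim}$ be the linear projection onto $E$, and define
\[
    \tentMap(\genVar) \Let \psi^{-1}\bigl(\pi(\psi(\genVar))\bigr)
\]
on a neighborhood of $\vertex$ small enough to ensure $\pi(\psi(\genVar)) \in V$ there (such a neighborhood exists by continuity, since $\pi(0) = 0$). The map $\tentMap$ is smooth, and the second defining condition of a tent is immediate: for such $\genVar$ the point $\pi(\psi(\genVar))$ lies in $V \cap E = \psi(\OMG \cap U)$, whence $\tentMap(\genVar) \in \OMG$. In particular this holds for every $\genVar \in \genTent$ sufficiently close to $\vertex$, so we may take $Q = \genTent$ itself in the definition of a tent.

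For the first-order condition, fix $\genVar$ near $\vertex$ and set $\xi \Let \genVar - \vertex$. Taylor expansion gives $\psi(\genVar) = d\psi(\vertex)\xi + o(\xi)$. When $\genVar \in \genTent$ the vector $\xi$ lies in $\TSpace{\vertex}\OMG$, so by the observation above $d\psi(\vertex)\xi \in E$ and is therefore fixed by $\pi$; consequently $\pi(\psi(\genVar)) = d\psi(\vertex)\xi + o(\xi)$. Expanding $\psi^{-1}$ at $0$ with $d\psi^{-1}(0) = d\psi(\vertex)^{-1}$ then yields
\[
    \tentMap(\genVar) = \vertex + \xi + o(\xi) = \genVar + o(\genVar - \vertex),
\]
which is the first defining condition. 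The one delicate point is precisely the step that the first-order term of $\psi(\genVar)$ for $\genVar \in \genTent$ must lie in $E$, so that applying $\pi$ does not introduce a first-order error along the normal directions; this is exactly what the tangent-space surjectivity observation in the first paragraph guarantees, and it is what forces the choice of chart $\psi$ to be compatible with the submanifold rather than arbitrary.
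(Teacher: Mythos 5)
Your overall strategy---straighten \(\OMG\) with a slice chart \(\psi\), project onto the flat model \(E\), and pull back with \(\psi^{-1}\)---is natural, and your verification of both tent conditions \emph{for points of the tangent plane} \(\genTent\) is correct. (For the record, the paper offers no proof of this statement: it is quoted from Boltyanskii and the section explicitly omits all proofs, so there is nothing to compare against line by line.) The genuine gap concerns the first defining condition of a tent. As stated in the paper, and in Boltyanskii's original definition, \(\tentMap(\genVar) = \genVar + o(\genVar - \vertex)\) must hold for \emph{every} \(\genVar\) in the neighbourhood on which \(\tentMap\) is defined; only the second condition is restricted to \(Q \cap \ball{\vertex}\). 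Your map \(\tentMap = \psi^{-1}\circ\pi\circ\psi\) has derivative \(d\psi(\vertex)^{-1}\circ\pi\circ d\psi(\vertex)\) at \(\vertex\), which is a projection onto \(\TSpace{\vertex}\OMG\) rather than the identity, so \(\tentMap(\genVar)-\genVar\) contains a nonzero \emph{linear} term along the normal directions. Concretely, for \(\OMG = \R\times\{0\}\subset\R^{2}\), \(\vertex = 0\), \(\psi = \mathrm{id}\), your map is \(\tentMap(x,y)=(x,0)\) and \(\tentMap(x,y)-(x,y)=(0,-y)\), which is not \(o(\abs{(x,y)})\). You flag exactly this as ``the one delicate point'' but then resolve it only for \(\genVar\in\genTent\), which does not meet the definition as written.

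The repair is small: do not retract \(\genVar\) onto \(\OMG\); instead add to \(\genVar\) the \emph{difference} between the retraction and its linearization. Set \(P \Let d\psi(\vertex)^{-1}\circ\pi\circ d\psi(\vertex)\), a linear projection of \(\R^{\genDim}\) onto \(\TSpace{\vertex}\OMG\) that fixes \(\TSpace{\vertex}\OMG\) pointwise, and define
\[
	\tentMap(\genVar) \Let \genVar + \psi^{-1}\bigl(\pi(\psi(\genVar))\bigr) - \vertex - P(\genVar-\vertex).
\]
Your own Taylor expansion gives \(\psi^{-1}\bigl(\pi(\psi(\genVar))\bigr) = \vertex + P(\genVar-\vertex) + o(\genVar-\vertex)\), so now \(\tentMap(\genVar)-\genVar = o(\genVar-\vertex)\) on the whole neighbourhood; and for \(\genVar\in\genTent\) one has \(P(\genVar-\vertex)=\genVar-\vertex\), hence \(\tentMap(\genVar)=\psi^{-1}\bigl(\pi(\psi(\genVar))\bigr)\in\OMG\), exactly as in your argument. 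Equivalently, write \(\OMG\) near \(\vertex\) as a graph \(y\mapsto \vertex + y+g(y)\) over the tangent space with \(g(0)=0\) and \(dg(0)=0\), and take \(\tentMap(\genVar)=\genVar+g\bigl(P(\genVar-\vertex)\bigr)\). With this one modification your proof is complete.
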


	\begin{theorem}[{{\cite[Theorem 9 on p.\ 12]{ref:Bol-75}}}]
	\label{th:half-space tent}
		Given a smooth function \(\genFun : \R^{\genDim} \lra \R\), let \(\vertex\) be such that \(\derivative{\genVar}{\genFun (\vertex )} \neq 0\). Define sets \(\OMG, \OMG_{0} \in \R^{\genDim}\) as
		\[
			\OMG \Let \set[\big]{ \genVar \in \R^{\genDim} \suchthat \genFun (\genVar ) \le \genFun (\vertex )}, \quad \OMG_{0} \Let \set[\big]{\vertex} \cup \set[\big]{\genVar \in \R^{\genDim} \suchthat \genFun (\genVar ) < \genFun (\vertex )}.
		\]
		Then the half-space \(\genTent\) given by the inequality \(\inprod{\derivative{\genVar}{\genFun (\vertex )}}{\genVar - \vertex} \le 0\) is a tent of both \(\OMG\) and \(\OMG_{0}\) at \(\vertex\).
	\end{theorem}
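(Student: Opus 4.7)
The plan is to construct a single explicit smooth map \(\tentMap\), defined in a neighbourhood of \(\vertex\), that will simultaneously realise the half-space \(\genTent\) as a tent of both \(\OMG\) and \(\OMG_{0}\). Writing \(g \Let \derivative{\genVar}{\genFun(\vertex)}\), which is nonzero by hypothesis, I would fix a constant \(c > 0\) to be chosen below and take
\[
    \tentMap(\genVar) \Let \genVar - c\,\abs{\genVar - \vertex}^2\, g.
\]
Geometrically this perturbs \(\genVar\) by a second-order amount in the descent direction \(-g\); this is the classical trick that absorbs the Hessian error and turns the first-order inequality \(\inprod{g}{\genVar - \vertex}\le 0\) into a sufficient condition for membership in \(\OMG\) near \(\vertex\). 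Condition (1) of the tent definition will then be immediate because \(\abs{\genVar - \vertex}^2 = o(\genVar - \vertex)\) in the sense of footnote \ref{fn:o-Notation}.

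For condition (2), I would Taylor-expand \(\genFun\) to second order about \(\vertex\). Since \(\tentMap(\genVar) - \vertex = (\genVar - \vertex) - c\,\abs{\genVar-\vertex}^2\, g\) is of order \(\abs{\genVar - \vertex}\), the expansion produces
\[
    \genFun(\tentMap(\genVar)) = \genFun(\vertex) + \inprod{g}{\genVar - \vertex} - c\,\abs{\genVar-\vertex}^2\,\abs{g}^2 + R(\genVar),
\]
where the remainder obeys \(\abs{R(\genVar)} \le M\,\abs{\genVar-\vertex}^2\) on a sufficiently small ball around \(\vertex\), with \(M\) controlled by any local bound on the Hessian of \(\genFun\). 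For \(\genVar \in \genTent\) the linear term is nonpositive, so the estimate collapses to
\[
    \genFun(\tentMap(\genVar)) - \genFun(\vertex) \le \bigl(M - c\,\abs{g}^2\bigr)\,\abs{\genVar - \vertex}^2.
\]
Choosing any \(c > M/\abs{g}^2\) renders the right-hand side nonpositive on a small enough neighbourhood; this places \(\tentMap(\genVar) \in \OMG\) for every \(\genVar \in \genTent\) sufficiently close to \(\vertex\), which is exactly the tent property for \(\OMG\). For \(\OMG_{0}\), the same estimate is \emph{strict} whenever \(\genVar \neq \vertex\), so \(\tentMap(\genVar)\) will lie in the open sublevel set, while the remaining case \(\genVar = \vertex\) is handled by \(\tentMap(\vertex) = \vertex \in \OMG_{0}\) directly from the definition of \(\OMG_0\).

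The only real technical point is to ensure that the Taylor remainder \(R\) is uniform in \(\genVar\) on a single small ball; this should follow immediately from \(C^2\)-smoothness of \(\genFun\) on any compact neighbourhood of \(\vertex\), keeping in mind that \(\tentMap\) itself moves points by only a second-order amount so that no subtleties arise when expanding around \(\vertex\) as opposed to around \(\genVar\). In short, the entire argument reduces to calibrating the constant \(c\) large enough to dominate the local Hessian bound, once the descent direction \(-g\) has been fixed; no deeper machinery is needed.
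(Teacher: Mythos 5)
Your construction is correct. Note first that the paper itself offers no proof of this statement: it is quoted verbatim from Boltyanskii (Theorem 9 on p.~12 of the cited survey), and \S\ref{sec:prelim} explicitly announces that all proofs in that section are omitted, so there is nothing in the paper to compare against line by line. Your argument is the classical one and it goes through: the map \(\tentMap(\genVar) = \genVar - c\abs{\genVar-\vertex}^2 g\) is smooth, satisfies \(\tentMap(\genVar) = \genVar + o(\genVar - \vertex)\) since the displacement is quadratic in \(\abs{\genVar - \vertex}\), and the second-order expansion with \(c\) dominating the local Hessian bound yields \(\genFun(\tentMap(\genVar)) \le \genFun(\vertex)\) on \(\genTent \cap \ball{\vertex}\), with strict inequality for \(\genVar \neq \vertex\), which handles \(\OMG\) and \(\OMG_0\) simultaneously. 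Two small points worth making explicit. First, the order of quantifiers in choosing the constants is not circular but should be spelled out: fix a closed ball on which the Hessian of \(\genFun\) is bounded by some \(M_0\), take \(c\) large relative to \(M_0/\abs{g}^2\), and only then shrink \(\epsilon\) (in particular \(\epsilon \lesssim 1/(c\abs{g})\)) so that \(\abs{\tentMap(\genVar)-\vertex} \le 2\abs{\genVar - \vertex}\) and \(\tentMap\) maps the small ball into the fixed one; your phrasing leaves \(M\) and the neighbourhood implicitly depending on each other. Second, your proof genuinely uses \(\genFun \in C^2\) (to get a remainder of order \(\abs{\genVar-\vertex}^2\)); this is fine under the theorem's smoothness hypothesis, but Boltyanskii's statement survives for merely \(C^1\) functions, where the quadratic displacement no longer dominates the \(o(\abs{\genVar - \vertex})\) remainder and one must instead calibrate the displacement to the modulus of continuity of the remainder. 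Within the hypotheses as stated, your proof is complete.
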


	\begin{theorem}[{{\cite[Theorem 10 on p.\ 12]{ref:Bol-75}}}]
	\label{th:supporting cone tent}
		Let \(\OMG \subset \R^{\genDim}\) be a convex set and let \(\genTent\) be its supporting cone at \(\vertex \in \OMG\). Then \(\genTent\) is a local tent of \(\OMG\) at \(\vertex\).
	\end{theorem}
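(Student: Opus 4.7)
The plan is to exhibit, for any $x \in \relInt \genTent$, a convex sub-cone $Q \subset \genTent$ generated by directions from $\vertex$ to finitely many points of $\OMG$, and to verify the tent axioms with $\tentMap$ equal to the identity, leveraging convexity of $\OMG$.

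\medskip

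I would first replace $\genTent$ by its ``ray version'' $\widehat{\genTent} \Let \{\vertex\} \cup \{\vertex + t(y - \vertex) : t > 0,\ y \in \OMG \setminus \{\vertex\}\}$. A direct check (rescale and average two such rays, using convexity of $\OMG$) shows that $\widehat{\genTent}$ is itself a convex cone with $\genTent = \overline{\widehat{\genTent}}$, whence $\affHull \widehat{\genTent} = \affHull \genTent$ and $\relInt \widehat{\genTent} = \relInt \genTent$. Set $V \Let \affHull \genTent - \vertex$ and $m \Let \dim V$. Every $x \in \relInt \genTent$ then admits a $V$-neighbourhood of $x - \vertex$ contained in $\widehat{\genTent} - \vertex$, whose elements lie on honest rays through points of $\OMG$.

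\medskip

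Assuming first $x \neq \vertex$, inside that neighbourhood I would pick linearly independent vectors $d_1, \ldots, d_m \in V$ forming an $(m{-}1)$-simplex on an affine hyperplane of $V$ transverse to $x - \vertex$, with $x - \vertex$ in the relative interior of the simplex. Each $d_i$ sits on a ray through some $y_i \in \OMG$, so $d_i = t_i(y_i - \vertex)$ with $t_i > 0$. Define
\[
    Q \Let \set[\big]{\vertex + \sum_{i=1}^m \beta_i(y_i - \vertex) \suchthat \beta_i \ge 0 \text{ for all } i}.
\]
Linear independence of the $y_i - \vertex$ gives $\affHull Q = \affHull \genTent$; convexity of $\widehat{\genTent}$ yields $Q \subset \widehat{\genTent} \subset \genTent$; and by construction $x - \vertex$ is a strictly positive combination of the $y_i - \vertex$, so $x \in \relInt Q$. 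Taking $\tentMap = \mathrm{id}$, the first tent axiom holds trivially, and linear independence supplies a constant $C > 0$ with $\sum_i \beta_i \le C\abs{z - \vertex}$ whenever $z = \vertex + \sum_i \beta_i(y_i - \vertex) \in Q$; for $\abs{z - \vertex} < 1/C$ we then have $\sum_i \beta_i < 1$, so $z$ lies in the simplex on $\vertex, y_1, \ldots, y_m$, which is contained in $\OMG$ by convexity.

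\medskip

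The degenerate case $x = \vertex$ (which can only occur when $\vertex \in \relInt \genTent$) must be treated separately: one takes $Q = \vertex + V$, the only convex cone at $\vertex$ with $\vertex$ in its relative interior, and the tent axiom reduces to showing $\vertex \in \relInt \OMG$. This follows from $\vertex \in \relInt \widehat{\genTent}$ by picking points $y_1, \ldots, y_{m+1} \in \OMG$ whose ray directions positively span $V$ and for which $\vertex$ is a strict convex combination; convexity then places a $V$-ball around $\vertex$ inside $\OMG$. The main obstacle throughout is the selection step in the generic case: one needs the generators $y_i$ to lie in $\OMG$ itself and not merely in the closed cone $\genTent$, while preserving that $x - \vertex$ be a \emph{strictly} positive combination of the $y_i - \vertex$. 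This is precisely what the identity $\relInt \widehat{\genTent} = \relInt \genTent$ buys, since it licenses perturbing $x - \vertex$ within $V$ onto honest rays through $\OMG$ rather than limiting boundary rays.
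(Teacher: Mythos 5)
Your proposal is correct. Note first that the paper does not actually prove this statement: it is quoted from Boltyanskii (Theorem 10 on p.\ 12 of the cited survey) and the authors explicitly omit all proofs in the preliminaries, so there is no in-paper argument to compare against; what you have written is a complete, self-contained proof along the classical lines. The key moves all check out: (i) passing from \(\genTent\) to the union of rays \(\widehat{\genTent}\) and invoking the standard facts \(\relInt \overline{C} = \relInt C\) and \(\affHull \overline{C} = \affHull C\) for convex \(C\) is exactly what lets you replace limiting boundary directions by honest directions \(y_i - \vertex\) with \(y_i \in \OMG\); (ii) choosing the \(d_i\) as an \((m-1)\)-simplex around \(x-\vertex\) in an affine hyperplane of \(V\) not through the origin guarantees both linear independence and strict positivity of the coefficients, hence \(\affHull Q = \affHull\genTent\) and \(x \in \relInt Q\); (iii) with \(\tentMap = \mathrm{id}\) the second tent axiom reduces to \(Q \cap B_{\epsilon}(\vertex) \subset \mathrm{conv}(\vertex, y_1,\dots,y_m) \subset \OMG\), which your norm-equivalence constant \(C\) delivers. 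The treatment of \(x = \vertex\) is also sound, though the parenthetical ``the only convex cone at \(\vertex\) with \(\vertex\) in its relative interior'' is literally true only after imposing the requirement \(\affHull Q = \affHull\genTent\) from the definition of a local tent (otherwise any affine subspace through \(\vertex\) would qualify); since that requirement is part of the definition, the conclusion \(Q = \vertex + V\) and the reduction to \(\vertex \in \relInt\OMG\) stand. The only detail worth spelling out in a final write-up is the standard fact that a strictly positive convex combination of finitely many points of a convex set lies in the relative interior of their convex hull, which you use implicitly in the degenerate case.
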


	We will also need the following two theorems regarding embedded submanifolds for the proof of our main result.
	\begin{theorem}[{{\cite[Theorem 6.15 on p.\ 134]{ref:Lee-13}}}]
	\label{th: whitney_embed}
	Every smooth n-manifold admits an embedding into $\R^{2n+1}$ as a closed submanifold.
	\end{theorem}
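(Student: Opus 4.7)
The plan is to proceed in three stages: (i) obtain an embedding of the manifold $M$ into some Euclidean space $\R^N$ of possibly very large dimension; (ii) reduce the target dimension to $2n+1$ using a projection argument powered by Sard's theorem; and (iii) arrange that the resulting image is closed in $\R^{2n+1}$, i.e.\ that the embedding is proper.

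For stage (i), in the compact case I would cover $M$ by finitely many chart neighbourhoods $(U_i, \varphi_i)$, $i=1,\ldots,k$, refined so that each $\overline{U_i}$ is contained in a strictly larger chart domain, and pick smooth bump functions $\rho_i$ equal to $1$ on $\overline{U_i}$ and compactly supported in the larger domain. The map
\[
F : M \lra \R^{(n+1)k}, \qquad p \longmapsto \bigl(\rho_1(p)\varphi_1(p), \rho_1(p), \ldots, \rho_k(p)\varphi_k(p), \rho_k(p)\bigr),
\]
is a smooth injective immersion, and compactness upgrades it to an embedding with closed image. In the non-compact case I would use paracompactness to produce a countable locally finite refinement together with a smooth proper exhaustion function $f : M \to [0,\infty)$, append $f$ as an extra coordinate, and sum the bump-function-localised chart maps in a manner that converges to a smooth proper injective immersion into some $\R^N$.

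For stage (ii), given an injective immersion $F : M \to \R^N$ with $N \ge 2n+2$, I would aim to project orthogonally along a well-chosen unit vector $v \in S^{N-1}$. Injectivity fails only if $F(p) - F(q) \in \R v$ for some distinct $p, q$, and the immersion property fails only if $TF(p) \cdot w \in \R v$ for some $p \in M$ and nonzero $w \in \TSpace{p} M$. Accordingly, define
\[
\sigma : (M \times M) \setminus \Delta \lra S^{N-1}, \qquad \sigma(p,q) = \frac{F(p)-F(q)}{\lVert F(p)-F(q)\rVert},
\]
and
\[
\tau : \TBundle{M} \setminus 0 \lra S^{N-1}, \qquad \tau(p, w) = \frac{TF(p)\cdot w}{\lVert TF(p)\cdot w\rVert}.
\]
Both domains have dimension $2n$, and the codomain has dimension $N-1 \ge 2n+1 > 2n$, so Sard's theorem (applied in charts) forces the images of $\sigma$ and $\tau$ to have measure zero in $S^{N-1}$. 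Any $v$ outside the union of these images yields, via orthogonal projection onto $v^\perp$, an injective immersion into $\R^{N-1}$. Iterating brings $N$ down to $2n+1$.

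For stage (iii), I would preserve properness throughout. One clean way is to carry the proper function $f$ constructed in stage (i) as an extra coordinate from the outset, and verify that at each projection step the chosen $v$ can be taken to also avoid the direction of the $f$-coordinate, so that $f$ continues to control divergence and properness persists; since a proper injective continuous map from a locally compact Hausdorff space into a Hausdorff space is a closed topological embedding, the image is automatically a closed submanifold. The main obstacle I anticipate is precisely this properness bookkeeping: it is easy to produce an injective immersion into $\R^{2n+1}$, but ensuring that the Sard-theoretic projections do not destroy properness (so that the final image is closed, not merely an immersed submanifold with accumulation at infinity) requires the careful use of the auxiliary proper function, and this is the one place where the argument is not completely mechanical.
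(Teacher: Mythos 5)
The paper does not prove this statement at all: it is quoted verbatim from the cited reference (Lee, \emph{Introduction to Smooth Manifolds}, Theorem 6.15) and used as a black box, so there is no in-paper proof to compare against. Your outline is, in fact, essentially the proof given in that cited source: partition-of-unity chart gluing to get into some $\R^N$, the two Sard-type maps on $(M\times M)\setminus\Delta$ and on the complement of the zero section of the tangent bundle (both of dimension $2n < N-1$) to project down to $\R^{2n+1}$, and an auxiliary proper exhaustion function to force closedness. The one place where your sketch as written would fail is stage (iii): choosing the projection direction $v$ merely to avoid $\pm e_{f}$ (the $f$-coordinate direction) does \emph{not} by itself guarantee that $f$ ``continues to control divergence.'' If the remaining coordinates of the partial embedding are unbounded, the component of the projected image along $v^{\perp}$ can stay bounded even as $f\to\infty$ (e.g.\ project $(g_1(p),f(p))$ along $v=(1,1)/\sqrt{2}$ when $g_1-f$ is bounded), so properness is genuinely destroyed. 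The standard repair, and the one used in the cited source, is to first compose the $\R^{2n+1}$-valued embedding with a diffeomorphism of $\R^{2n+1}$ onto a bounded set (so those coordinates are bounded), \emph{then} append the proper function $f$ as a $(2n+2)$nd coordinate, and only then project once along a Sard-generic $v\neq\pm e_{2n+2}$: boundedness of the first block plus divergence of the $f$-block makes the projected norm tend to infinity, and a proper injective immersion is a closed embedding. With that adjustment your argument is complete and matches the reference.
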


	\begin{theorem}[{{\cite[Lemma 5.34 on p.\ 115]{ref:Lee-13}}}]
	\label{th: extension}
	Let $\M$ be an n-dimensional smooth manifold and $i: \M \rightarrow \R^N$ be a smooth embedding such that $i(\M)$ is a closed subset of $\R^N$. If $f:\M \rightarrow \R$ is a smooth function, there exists a smooth function $\tilde{f}: \R^N \rightarrow \R$ such that $f = \tilde{f}\circ i$.  
	\end{theorem}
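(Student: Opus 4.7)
The plan is to construct $\tilde{f}$ by first producing smooth local extensions of $f$ in a neighborhood of each point of $i(\M)$ using the submanifold structure, and then gluing these pieces together with a smooth partition of unity on $\R^N$. The closedness of $i(\M)$ will be what allows the complement $\R^N \setminus i(\M)$ to be adjoined to the cover cleanly.

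First I would fix $p \in \M$. Since $i$ is a smooth embedding, there exists a cube-shaped slice chart around $i(p)$: an open neighborhood $U_p \subset \R^N$ of $i(p)$ together with a diffeomorphism $\varphi_p : U_p \to A_p \times B_p$, where $A_p \subset \R^n$ and $B_p \subset \R^{N-n}$ are open, such that $\varphi_p\bigl(U_p \cap i(\M)\bigr) = A_p \times \{0\}$. Letting $\pi_p : A_p \times B_p \to A_p \times \{0\}$ denote the projection $(y, z) \mapsto (y, 0)$, I would set
\[
    \tilde{f}_p \Let f \circ i^{-1} \circ \varphi_p^{-1} \circ \pi_p \circ \varphi_p \;:\; U_p \longrightarrow \R.
\]
This composition is well-defined (the image of $\pi_p$ lies inside $\varphi_p(U_p \cap i(\M))$, on which $i^{-1}$ makes sense), smooth as a composition of smooth maps, and satisfies $\tilde{f}_p \circ i = f$ on $i^{-1}(U_p)$.

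Next, because $i(\M)$ is closed in $\R^N$, the complement $U_0 \Let \R^N \setminus i(\M)$ is open, so $\{U_0\} \cup \{U_p\}_{p \in \M}$ is an open cover of $\R^N$. By paracompactness of $\R^N$ there exists a smooth partition of unity $\{\psi_0\} \cup \{\psi_p\}_{p \in \M}$ subordinate to this cover, and I would define
\[
    \tilde{f}(x) \Let \sum_{p \in \M} \psi_p(x)\, \tilde{f}_p(x),
\]
with each summand interpreted as zero outside its support. Local finiteness of the partition makes $\tilde{f}$ smooth on $\R^N$. For any $x = i(q) \in i(\M)$, the support condition forces $\psi_0(x) = 0$, and for every $p$ with $\psi_p(x) > 0$ the point $x$ lies in $U_p \cap i(\M)$, whence $\tilde{f}_p(x) = f(q)$. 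Summing gives $\tilde{f}(i(q)) = f(q) \sum_{p} \psi_p(x) = f(q)$, so $\tilde{f} \circ i = f$ as required.

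The main obstacle, as I see it, is mostly bookkeeping: one must verify that the slice chart can be chosen in the product form $A_p \times B_p$ so that the projection-based construction of $\tilde{f}_p$ is well-defined, and that the weighted partition-of-unity sum is globally smooth. The closedness hypothesis does the crucial work by letting $U_0$ be adjoined to the cover; without it the complement need not be open and the gluing would break down near the topological boundary of $i(\M)$.
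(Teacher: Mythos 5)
Your proof is correct. The paper does not prove this statement itself---it cites it as Lemma 5.34 of Lee's \emph{Introduction to Smooth Manifolds}---and your argument (local extensions via slice charts followed by gluing with a partition of unity subordinate to a cover that includes the open complement of the closed set $i(\M)$) is essentially the standard proof given in that reference.
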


	\section{Problem setup}
		\label{sec:opt prob}
Consider a discrete time control system evolving on an \(\sysDim\) dimensional smooth manifold \(\M\) described by
\begin{equation}
\label{e:gen sys}
	\state_{t+1} = \sysDyn (\state_{t}, \conInp_{t}) \quad \text{for } t = 0, \ldots, \horizon-1,
\end{equation}
where \(\state_{t} \in \M\), \(\conInp_{t} \in \R^{\conDim}\), and \( (\sysDyn)_{t=0}^{\horizon-1}\) is a family of maps such that \(\M \times \R^{\conDim} \ni (\xi, \mu) \mapsto \sysDyn[s](\xi, \mu) \in \M\) is continuously differentiable for each \(s = 0, \ldots, \horizon-1\). We emphasize that the condition $\state_t \in \M$ is not being enforced as a constraint; $\M$ is the natural state space of the control system \eqref{e:gen sys}. To wit, it is an intrinsic property of the family of the dynamics \((\sysDyn)_{t=0}^{\horizon-1}\) that any trajectory of \eqref{e:gen sys} starting on the manifold $\M$ lies entirely on $\M$.

Let \(\conInp \kth \Let (\conInp_{t} \kth)_{t = 0}^ {\horizon-1}\) denote the \(k^\text{th}\) control sequence, and \(\freqComp\) denote its discrete Fourier transform (DFT). The relationship between \(\freqComp\) and \(\conInp \kth\) is given by \cite[Chapter 7]{Stein-Shakarchi}:
\begin{equation}
\label{e:frequency components}
\begin{aligned}
	\freqComp \Let (\freqComp_{\xi})_{\xi=0}^{\horizon-1} = \biggl( \sum_{t = 0}^{\horizon-1} \conInp_{t}\kth \epower{-\ii 2\pi \xi t/\horizon} \biggr)_{\xi=0}^{\horizon-1} \quad & \text{for } \xi = 0, \ldots, \horizon-1 \\
	& \text{and } k = 1, \ldots, \conDim.
\end{aligned}
\end{equation}

Let $\horizon \in \N$ be fixed. The objective of this article is to provide first-order necessary conditions of a finite horizon constrained optimal control problem with continuously differentiable stage cost, terminal cost, and inequality and equality constraints. We write our abstract optimal control problem as:
\begin{equation}
\label{e:abstract prob}
\begin{aligned}
	& \minimize_{(\conInp_{t})_{t=0}^{\horizon -1}} && \sum_{t=0}^{\horizon-1} \cost (\state_{t}, \conInp_{t}) + \cost[T](\state_{T})\\
	& \sbjto &&
	\begin{cases}
		\text{dynamics \eqref{e:gen sys}},\\
		\text{state constraints at each stage } t = 0, \ldots, \horizon,\\
		\text{control constraints at each stage } t = 0, \ldots, \horizon-1,\\
		\text{constraints on frequency components of the control sequence.}
	\end{cases}
\end{aligned}
\end{equation}
where \(\M \ni \xi \mapsto \cost[T](\xi) \in \R\) and \(\M \times \R^{\conDim} \ni (\xi, \mu)\mapsto \cost(\xi, \mu) \in \R\) are continuously differentiable functions representing the terminal cost and stage cost at time \(t\) respectively, for \(t = 0, \ldots, \horizon-1\).

The three types of constraints considered in the optimal control problem \eqref{e:abstract prob} are as follows:
\begin{enumerate}[label=(\roman*), leftmargin=*, align=left, widest=iii]

	\item \emph{State constraints}: Let \( (\sysCons)_{t=1}^{\horizon}\) be a family of maps such that \(\M \ni \xi \mapsto \sysCons[s](\xi) \in \R^{\consDim[s]}\) is continuously differentiable for each \(s = 0, \ldots, \horizon\). We restrict the trajectories of the states \((\state_{t})_{t=0}^{\horizon}\) to be such that
	\begin{align*}
	\label{e:state constr.}
		\state_{0} &= \initState \text{ and } \sysCons[t](\state_{t}) \leq 0 \quad \text{for } t = 1, \ldots, \horizon.
	\end{align*}
	
	\item \emph{Control constraints}: \(\admControl_{t} \subset \R^{\conDim}\) is a given but otherwise arbitrary non-empty set for each \(t = 0, \ldots, \horizon\). We impose the requirement that the control action \(\conInp_{t}\) at stage \(t\) must lie in \(\admControl_{t}\):
	\begin{equation}
	\label{e:control constr.}
	\begin{aligned}
		\conInp_{t} \in \admControl_{t} \quad \text{for } t = 0, \ldots, \horizon-1.
	\end{aligned}
	\end{equation}
	
\item \emph{Frequency constraints}: For the $k^\text{th}$ component of the control sequence \(\conInp \kth\), we define \(\admFreq \kth \subset \C^{\horizon}\) to be the set of admissible frequency components of its discrete Fourier transform (DFT) \( \ \freqComp = (\freqComp_{\xi})_{\xi=0}^{\horizon-1}\). 
	For a vector \(v \in \C^{\horizon}\) we define its support as
	\[
		\support(v) \Let \set[\big]{ i \in \{1, \ldots, \horizon \} \suchthat v_{i} \not = 0},
	\]
	and stipulate that
	\begin{equation}
	\label{e:freq constr.}
		\freqComp \in \admFreq \kth \Let \set[\big]{ v \in \C^{\horizon} \suchthat \support(v) \subset W \kth},
	\end{equation}
	where \(W \kth \subset \{1, \ldots, \horizon \}\) represents the support for the admissible frequencies in the \(k^\text{th}\) control sequence. The sets \(\bigl(W \kth \bigr)_{k=1}^{\conDim}\) are assumed to be given as part of the problem specification. In effect the constraint \eqref{e:freq constr.} ensures that the frequency spectrum of the $k^\text{th}$ component of the control sequence does not contain any non-zero entries lying outside the set $W \kth$. Frequency constraints of the form \eqref{e:freq constr.} are required in applications where the designer is required to suppress certain undesirable frequency components in the control sequence. For instance, in satelites with flexible structures attached to them, damages to such structures may occur if their natural frequencies are excited in course of their motion. In such a situation it is essential to avoid the natural frequencies of the structures from the spectrum of the control trajectories, and such constraints are ensured precisely by constraints of the form \eqref{e:freq constr.}. It can be shown that \eqref{e:freq constr.} can be recast into a more condensed form as 
	\begin{equation}
	\label{e:freq assumptions}
		\freqConstr (\conInp_{0}, \ldots, \conInp_{\horizon-1}) = \sum_{t=0}^{\horizon-1} \freqDer \conInp_{t} = 0 \quad \text{,where \ } \freqDer[t] \in \R^{\freqDim \times \conDim}, \text{for some \ } \freqDim \in \N,
	\end{equation}
	where the matrices $\bigl( \freqDer[t] \bigr)_{t=0}^{\horizon-1}$ depend on the sets \(\bigl(W \kth \bigr)_{k=1}^{\conDim}\). For a more detailed discussion on how this transformation can be done, we refer the reader to \cite{ref:ParCha-17}. We shall refer to \(\freqConstr\) as our \embf{frequency constraint map}.
\end{enumerate}

The abstract optimal control problem \eqref{e:abstract prob} can now be formally written as:
\begin{equation}
\label{e:opt prob}
\begin{aligned}
& \minimize_{(\conInp_{t})_{t=0}^{\horizon -1}} && \sum_{t=0}^{\horizon-1} \cost (\state_{t}, \conInp_{t}) + \cost[\horizon](\state_{\horizon}) \\
	& \sbjto &&
	\begin{cases}
		\text{dynamics \eqref{e:gen sys}},\\
		\state_{0} = \initState, \\
		\sysCons[t](\state_{t}) \leq 0 \quad \text{for } t = 1, \ldots, \horizon,\\
		\conInp_{t} \in \admControl_{t} \quad \text{for } t = 0, \ldots, \horizon-1,\\
		\sum_{t=0}^{\horizon-1} \freqDer \conInp_{t} = 0
	\end{cases}
\end{aligned}
\end{equation}

An optimal solution \((\optCon)_{t=0}^{\horizon-1}\) of \eqref{e:opt prob} is a sequence in \(\prod_{i=0}^{\horizon-1} \admControl_{i}\), and it generates its corresponding optimal state trajectory \((\optState)_{t=0}^{\horizon}\) according to \eqref{e:gen sys}. The pair \( \bigl( (\optState)_{t=0}^\horizon, (\optCon)_{t=0}^{\horizon-1} \bigr)\) is called an \embf{optimal state-action trajectory}.

At this point we make note of the following notational convention in effect throughout the sequel: $\partialCoTLift{\state}{\sysDyn}{\genVar_{0}}{\conInp_{0}}{}$ will denote the cotangent lift of the map $\sysDyn(\cdot, \conInp_{0}): \M \rightarrow \M$ at $\genVar_{0}$ and $\partialCoTLift{\conInp_{}}{\sysDyn}{\genVar_{0}}{\conInp_{0}}{}$ will denote the cotangent lift of the map $\sysDyn(\genVar_{0}, \cdot): \R^{\conDim} \rightarrow \M$ at $\conInp_{0}$. Similarly, $\differential{\state}{\cost}{\genVar_{0},\conInp_{0}}$ will denote the differential of the map $\cost(\cdot, \conInp_{0}): \M \rightarrow \R$ at $\genVar_{0}$ and $\differential{\conInp_{}}{\cost}{\genVar_{0},\conInp_{0}}$ will denote the differential of the map $\cost(\genVar_{0}, \cdot): \R^{\conDim} \rightarrow \R$ at $\conInp_{0}$.

	\section{Main result}
		\label{sec:main result}
	The following theorem provides first order necessary conditions for optimal solutions of \eqref{e:opt prob}; it is the main result of this article. 
	
	\begin{theorem}[PMP on smooth manifolds]
	\label{th:main pmp}
		Let \(\bigl((\optState)_{t=0}^{\horizon}, (\optCon)_{t=0}^{\horizon-1} \bigr)\) be an optimal state-action trajectory for \eqref{e:opt prob}.
		Then there exist
		\begin{itemize}[label=\(\circ\), leftmargin=*]
			\item a trajectory \(\bigl(\adjDyn \bigr)_{t=1}^{\horizon} \subset \coTBundle{\M}\) with \( \adjDyn \in \coTSpace{\optState}\M \) for each $t$ (the adjoint trajectory),
			\item a sequence \(\bigl(\adjState \bigr)_{t=1}^{\horizon} \) with \(\adjState \in \dualSpace{\bigl( \R^{\consDim[t]} \bigr)} \) for each $t$ (the multipliers corresponding to the point-wise state constraints), and
			\item a pair \(\bigl(\adjCost, \adjFreq \bigr) \in \R \times \dualSpace{\big(\R^{\freqDim}\bigr)}\) (the abnormal multiplier and the multiplier corresponding to the frequency constraints, respectively),
		\end{itemize}
		satisfying the following conditions:
		\begin{enumerate}[label={\textup{(PMP-\roman*)}}, leftmargin=*, align=left, widest=iii]
			\item \label{pmp:non-negativity} non-negativity:
				\begin{quote}
					\(\adjCost \ge 0, \bigl(\adjState \bigr)_{t=1}^{\horizon}  \ge 0  ;\)
				\end{quote}
			\item \label{pmp:non-triviality} non-triviality:
				\begin{quote}
					the sequence \( \bigl(\adjState\bigr)_{t=1}^{\horizon}\) and the pair \( \bigl(\adjCost, \adjFreq \bigr) \) do not simultaneously vanish;
				\end{quote}
			\item \label{pmp:optimal dynamics} state and adjoint system dynamics
				\begin{alignat*}{2}
					& \optState[t+1] = \sysDyn(\optState, \optCon) \quad \text{ for } t = 0, \ldots, \horizon-1, \\				
					& \adjDyn[t] = \partialCoTLift{\state}{\sysDyn}{\optState}{\optCon}{\adjDyn[t+1]} - \adjCost \differential{\state}{\cost}{\optState, \optCon} - \coTLift{}{\sysCons}{\optState}{\adjState} \quad \text{ for } t = 1, \ldots, \horizon-1;
				\end{alignat*}
			\item \label{pmp:transversality} transversality:
				\begin{align*}
					\adjDyn[\horizon] = -\adjCost \differential{}{\cost[\horizon]}{\optState[\horizon]} - \coTLift{}{\sysCons[\horizon]}{\optState[\horizon]}{\adjState[\horizon]} ;
				\end{align*}
			\item \label{pmp:Hamiltonian VI} Hamiltonian maximization, point-wise in time,
				\begin{align*}
                    \inprod{\partialCoTLift{\conInp{}}{\sysDyn}{\optState}{\optCon}{\adjDyn[t+1]} - \adjCost \differential{\conInp{}}{\cost}{\optState,\optCon} + \freqDer^{T}\adjFreq }{\perturb{\conInp_t}} \le 0 
					\end{align*}
				whenever \( \optCon + \perturb{\conInp}_{t} \in \conTent ( \optCon ) \), where \(\conTent(\optCon)\) is a local tent at \(\optCon\) of the set \(\admControl_{t}\) of admissible actions;
				\item \label{pmp:complementary slackness} complementary slackness:
				\begin{align*}
				(\adjState)^{j}\sysCons^{j}(\optState) = 0 \quad \text{ for all } j \in \upto{\consDim}.
				\end{align*}
		\end{enumerate}
	\end{theorem}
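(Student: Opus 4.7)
The plan is to execute the three-step program announced in the introduction: embed $\M$ into Euclidean space using the Whitney embedding theorem, extend every piece of problem data off $\M$, apply the Euclidean discrete-time PMP with frequency constraints of \cite{ref:ParCha-17} to the extended problem, and finally pull the resulting necessary conditions back to $\M$ through the cotangent lift of the embedding. Concretely, Theorem~\ref{th: whitney_embed} provides a smooth embedding $i\colon \M \hookrightarrow \R^{N}$ with $N = 2\sysDim + 1$ whose image is closed; since $\M \times \R^{\conDim}$ then embeds as a closed submanifold of $\R^{N} \times \R^{\conDim}$, Theorem~\ref{th: extension} applied coordinatewise produces smooth extensions $\tilde{\sysDyn}$, $\tilde{\cost}$, $\tilde{\cost}[\horizon]$, $\tilde{\sysCons}$, characterised by the identities $\tilde{\sysDyn} \circ (i \times \mathrm{id}) = i \circ \sysDyn$, $\tilde{\cost} \circ (i \times \mathrm{id}) = \cost$, $\tilde{\cost}[\horizon] \circ i = \cost[\horizon]$, and $\tilde{\sysCons} \circ i = \sysCons$.

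Next I would set up an extended Euclidean optimal control problem on $\R^{N}$ using the tilded data, with initial condition $x_{0} = i(\initState)$ and the control, admissibility, and frequency constraints left untouched. The dynamics extension identity forces every extended trajectory initialised at $i(\initState)$ to remain in $i(\M)$ for all $t$, which yields a cost-preserving bijection between the admissible state-action trajectories of the two problems; in particular, $\bigl((i(\optState))_{t=0}^{\horizon}, (\optCon)_{t=0}^{\horizon-1}\bigr)$ is optimal for the extended problem. Invoking the Euclidean PMP of \cite{ref:ParCha-17} then yields adjoint covectors $(p_{t})_{t=1}^{\horizon} \subset (\R^{N})^{*}$, state-constraint multipliers $(\mu_{t})_{t=1}^{\horizon}$ with $\mu_{t} \in (\R^{\consDim})^{*}$, an abnormal multiplier $\nu \ge 0$, and a frequency multiplier $\lambda \in (\R^{\freqDim})^{*}$, all satisfying the Euclidean analogues of \ref{pmp:non-negativity}--\ref{pmp:complementary slackness}. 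I would then define the manifold data by setting $\adjDyn := T^{*}i(\optState)(p_{t}) \in \coTSpace{\optState}\M$, $\adjState := \mu_{t}$, $\adjCost := \nu$, and $\adjFreq := \lambda$. Conditions \ref{pmp:non-negativity} and \ref{pmp:complementary slackness} transfer verbatim, and Euclidean non-triviality of the tuple $(\mu, \nu, \lambda)$ transfers to \ref{pmp:non-triviality} directly --- if all of these multipliers vanished, transversality and the adjoint recursion would force $p_{t} \equiv 0$, contradicting the Euclidean non-triviality.

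The crux is translating the adjoint recursion, transversality condition, and Hamiltonian variational inequality from $\R^{N}$ down to $\M$. Differentiating the identity $i \circ \sysDyn = \tilde{\sysDyn} \circ (i \times \mathrm{id})$ separately in the state and control variables and dualising produces the intertwiners $T^{*}i(\optState) \circ (T_{x} \tilde{\sysDyn})^{*} = (T_{x} \sysDyn)^{*} \circ T^{*}i(\optState[t+1])$ and $(T_{u} \tilde{\sysDyn})^{*} = (T_{u} \sysDyn)^{*} \circ T^{*}i(\optState[t+1])$, with analogous identities for the cost differentials and for $(T\sysCons)^{*}$. Applying $T^{*}i(\optState)$ to both sides of the Euclidean adjoint equation and the transversality condition, and observing that the Hamiltonian inequality lives intrinsically on the unchanged control space $\R^{\conDim}$ and involves the same local tent $\conTent(\optCon)$ of $\admControl_{t}$, these intertwiners produce exactly \ref{pmp:optimal dynamics}, \ref{pmp:transversality}, and \ref{pmp:Hamiltonian VI}. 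The main obstacle I anticipate is the careful bookkeeping of these pullbacks: $T^{*}i$ has a nontrivial kernel (the annihilator of $Ti(T\M)$), so one must ensure at each step that every Euclidean covector appearing in the computation is either composed with $T^{*}i$ or first passed through a derivative-of-$i$ intertwiner, so that the resulting equalities are honest identities in $\coTSpace{\optState}\M$ rather than merely holding modulo that annihilator.
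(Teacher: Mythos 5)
Your three-step route (Whitney embedding with closed image, coordinatewise extension of the data off $i(\M)$, invariance of $i(\M)$ under the extended dynamics giving a cost-preserving correspondence of feasible trajectories, application of the Euclidean result of \cite{ref:ParCha-17}, and pull-back through the cotangent lift $T^{*}i$) is exactly the paper's proof, and the details you supply are correct where you supply them: the chain-rule intertwiners for the state and control derivatives are the ones used in the paper's Step 3, and your transfer of non-triviality (if all the multipliers vanished, transversality plus the adjoint recursion would force the Euclidean adjoint trajectory to vanish, contradicting the non-triviality asserted in \cite{ref:ParCha-17}) is verbatim the argument in the paper's Proposition \ref{p: emb_pmp}.

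There is, however, one genuine gap, in your Step 2: the treatment of the pointwise state constraints. The Euclidean necessary conditions you invoke (\cite[Proposition C.6]{ref:ParCha-17}) are formulated for state constraints given as abstract sets equipped with tents, with the state-constraint contribution to the adjoint equation expressed as an element of the \emph{dual cone of a tent} of the constraint set; they do not directly deliver multipliers $\mu_{t}\ge 0$ satisfying complementary slackness together with the term $(T\tilde g_{t})^{*}\mu_{t}$ in the adjoint recursion. To obtain that concrete form the paper must do two things you omit: (a) under the hypothesis that the extended constraint map $\tilde g_{t}$ is \emph{regular} at $i(\optState)$ in the sense of Definition \ref{def: regular}, show that the linearized cone cut out by the active constraints is a tent of $\{\tilde g_{t}\le 0\}$ (an intersection-of-tents argument that needs the inseparability furnished by regularity) and identify its dual cone, via Farkas' lemma, with $\{(T\tilde g_{t})^{*}\mu : \mu\ge 0,\ \mu^{i}\tilde g_{t}^{i}=0\}$ --- this is Proposition \ref{p: constraint_tent}; and (b) dispose of the case where regularity fails at some time $s$ by exhibiting a nonzero $\mu_{s}\ge 0$ with $(T\tilde g_{s})^{*}\mu_{s}=0$ and $\mu_{s}^{i}\tilde g_{s}^{i}=0$, setting all other multipliers and the adjoint trajectory to zero, and observing that every asserted condition then holds trivially. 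Without (a), the multiplier form of \ref{pmp:optimal dynamics}, \ref{pmp:transversality} and \ref{pmp:complementary slackness} does not follow from the cited Euclidean result; without (b), your argument proves the theorem only under an unstated constraint-qualification hypothesis, whereas the theorem as stated imposes none.
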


	We present a complete but elementary proof of \autoref{th:main pmp} in  \S\ref{sec:proofs}. 

	\subsection*{Discussion}
	The rest of this section is devoted to a scrutiny of various facets of \autoref{th:main pmp} over a sequence of remarks.

	\begin{remark}
		One of the points of departure of \autoref{th:main pmp} from the Euclidean version of the PMP given in \cite[Theorem 3.1]{ref:ParCha-17} is \ref{pmp:Hamiltonian VI}. To wit, there appears to be no natural way of defining a Hamiltonian function analogous to the one given in \cite[Theorem 3.1]{ref:ParCha-17} in the geometric framework. It is also worth noting that the absence of a natural Hamiltonian is peculiar to the discrete time setting since a Hamiltonian function arises naturally in the continuous time geometric PMP. Indeed, in continuous time a key element in the definition of the Hamiltonian is the duality product between the adjoint trajectory lying on the cotangent bundle and the tangent vector field along the optimal state trajectory lying on the tangent bundle. In the discrete time geometric setting, however, the adjoint trajectory remains on the cotangent bundle of the manifold, but the tangent vector field is replaced by a discrete trajectory lying on the manifold itself. Since there is no natural product (pairing) between an element of the cotangent bundle and an element of the manifold, a natural definition of a Hamiltonian is difficult to arrive at.
	\end{remark}

	\begin{remark}
		It is not entirely appropriate to use the term ``Hamiltonian maximization condition'' for \ref{pmp:Hamiltonian VI}; we have not even defined a Hamiltonian function here, let alone derive a maximization condition. We still use this name for the condition because it is analogous to the actual Hamiltonian maximization condition in the continuous time counterpart of the PMP. However, such a maximization condition does hold under additional structural assumptions on the sets of admissible actions and regularity assumptions on the cost and transition maps. We refer the reader to \cite[\S 3.1]{ref:KipGup-17} for a detailed exposition on this.
	\end{remark}

	\begin{remark}
		The non-triviality condition \ref{pmp:non-triviality} stated here is somewhat non-standard. The non-triviality condition is usually stated as the adjoint trajectory \(\bigl(\adjDyn \bigr)_{t=1}^{\horizon} \) and the pair \( \bigl(\adjCost, \adjFreq \bigr) \) do not simultaneously vanish. The condition given in \ref{pmp:non-triviality} is slightly weaker than the standard non-triviality condition; if \( \bigl( \bigl(\adjDyn \bigr)_{t=1}^{\horizon}, \adjCost, \adjFreq \bigr) \) could not simultaneously vanish, then \( \bigl( \bigl(\adjState \bigr)_{t=1}^{\horizon}, \adjCost, \adjFreq \bigr) \) would not vanish simultaneously either, since if it did, by \ref{pmp:optimal dynamics} and \ref{pmp:transversality} \( \bigl( \bigl(\adjDyn \bigr)_{t=1}^{\horizon}, \adjCost, \adjFreq \bigr) \) would also vanish simultaneously.
		\par However, under the additional assumption of the constraints $\sysCons$ being regular (as defined in Definition \ref{def: regular}) at $\optState$, the condition stated in \ref{pmp:non-triviality} is equivalent to the standard non-triviality condition. Suppose\( \bigl( \bigl(\adjDyn \bigr)_{t=1}^{\horizon}, \adjCost, \adjFreq \bigr) \) did vanish simultaneously, then by \ref{pmp:optimal dynamics} and \ref{pmp:transversality}, we get that
		\begin{equation*}
			\coTLift{}{\sysCons}{\optState}{\adjState} = 0.
		\end{equation*}
		Also, by \ref{pmp:non-negativity} and \ref{pmp:complementary slackness}, we have
		\begin{align*}
			\adjState &\geq 0 \quad \text{and} \quad (\adjState)^{j}\sysCons^{j}(\optState) = 0 \; \text{ for all } j \in \upto{\consDim}.
		\end{align*}
		If the constraints $\sysCons$ are regular at $\optState$, the only $\adjState$ satisfying these three conditions will be $\adjState = 0$. Therefore, \( \bigl( \bigl(\adjState \bigr)_{t=1}^{\horizon}, \adjCost, \adjFreq \bigr) \) would also vanish simultaneously, contradicting \ref{pmp:non-triviality}.
	\end{remark}
	
	\begin{remark}
			First order necessary conditions for locally optimal solutions of finite dimensional constrained optimization problems (such as the KKT conditions) usually accompany a ``constraint qualification'' condition which at first glance is completely absent in our discussion. The difference between conditions \ref{pmp:non-negativity} - \ref{pmp:complementary slackness} and the standard KKT conditions is the presence of the abnormal multiplier $\adjCost$. Observe that \ref{pmp:non-negativity} only guarantees that $\adjCost \geq 0$, it is still possible that $\adjCost = 0$. When $\adjCost = 0$ we arrive at an ``abnormal'' situation where the necessary conditions \ref{pmp:non-negativity} - \ref{pmp:complementary slackness} no longer depend on either the stage costs or the terminal cost; this situation arises typically when the constraints are so tight that the cost functions play no key role in the determination of the optimizer(s). In the context of the PMP, constraint qualification conditions serve the purpose of strengthening the conditions of Theorem \ref{th:main pmp} by guaranteeing that $\adjCost$ is non-zero thereby precluding the aforementioned abnormal situation. Due to the presence of the abnormal multiplier, the PMP as presented in Theorem \ref{th:main pmp} holds regardless of any such constraint qualification conditions.
	\end{remark}

	\begin{remark}
		The conditions \ref{pmp:non-negativity} - \ref{pmp:complementary slackness} together constitute a well-defined two point boundary value problem, with \ref{pmp:transversality} along with the initial condition $\state_{0} = \initState$ giving the entire set of boundary conditions. Algorithms based on Newton step methods may be employed to solve this (algebraic) two point boundary value problem; see, eg., \cite[\S 2.4]{ref:Tre-12} for an illuminating discussion in the context of continuous-time problems. Fast solution techniques for two point boundary value problems is an active field of research.
	\end{remark}

	\section{Proof of the main result}
		\label{sec:proofs}
We present a proof of \autoref{th:main pmp} through the following steps:
\begin{itemize}
\item Step 1: The configuration manifold is embedded in a Euclidean space and we convert \eqref{e:opt prob} into an equivalent optimal control problem on this Euclidean space.
\item Step 2: First order necessary conditions for the equivalent problem on the Euclidean space are applied to the problem in Step 1.
\item Step 3: The necessary conditions in Step 2 are lifted back to the original manifold.
\end{itemize}

\subsection{Step 1}
\label{sec:main_proof_step1}

By \autoref{th: whitney_embed}, one can find a smooth embedding of \( \M \text{ \ in} \ \R^\embSysDim \), where \( \embSysDim = 2\sysDim + 1 \), such that the image of the embedding is a \emph{closed} subset of \( \R^{\embSysDim} \). Let \( \inclusionMap : \M \lra \R^{\embSysDim} \) denote such a smooth embedding.

We observe that \( i(\M) \times \admControl \ni (\xi, \mu) \mapsto \inclusionMap \circ \sysDyn (i^{-1}(\xi), \mu) \in \R^{\embSysDim} \) is a smooth map from a closed subset of \( \R^\embSysDim \times \R^\conDim \) to \( \R^\embSysDim \). Hence, it can be extended to a smooth map \( \embSysDyn : \R^{\embSysDim} \times \R^{\conDim} \lra \R^{\embSysDim} \) on the whole of \( \R^{\embSysDim} \). Similarly, \( \inclusionMap(\M) \ni \xi \mapsto \sysCons \circ i^{-1}(\xi) \in \R^{\consDim} \) and \( \inclusionMap(\M) \ni \xi \mapsto \cost \circ i^{-1}(\xi) \in \R \) are smooth maps from a closed subset of \( \R^{\embSysDim} \) to \( \R^{\consDim} \) and \( \R \), and hence they can be extended to corresponding smooth maps \( \embSysCons :  \R^{\embSysDim} \lra \R^{\consDim} \) and \( \embCost :  \R^{\embSysDim} \lra \R \).

Now let us define an extended optimal control problem
\begin{equation}
\label{e:emb opt prob}
\begin{aligned}
& \minimize_{(\conInp_{t})_{t=0}^{\horizon -1}} && \sum_{t=0}^{\horizon-1} \embCost (\embState_{t}, \conInp_{t}) + \embCost[T](\embState_{T})\\
	& \sbjto &&
	\begin{cases}
		\embState_{t+1} = \embSysDyn (\embState_{t}, \conInp_{t}) \quad \text{for } t = 0, \ldots, \horizon-1, \\
		\embState_{0} = \embInitState = \inclusionMap(\initState), \\
		\embSysCons[t](\embState_{t}) \leq 0 \quad \text{for } t = 1, \ldots, \horizon,\\
		\conInp_{t} \in \admControl_{t} \quad \text{for } t = 0, \ldots, \horizon-1,\\
		\sum_{t=0}^{\horizon-1} \freqDer \conInp_{t} = 0.
	\end{cases}
\end{aligned}
\end{equation}
If \(\bigl((\state_t)_{t=0}^{\horizon}, (\conInp_{t})_{t=0}^{\horizon-1}\bigr)\) is a feasible state-action trajectory of \eqref{e:opt prob}, then \(\bigl((i(\state_t))_{t=0}^{\horizon}, (\conInp_{t})_{t=0}^{\horizon-1}\bigr)\) is, clearly, a feasible state-action trajectory of \eqref{e:emb opt prob}. If \(\bigl((\embState_t)_{t=0}^{\horizon}, (\conInp_{t})_{t=0}^{\horizon-1}\bigr)\) is a feasible state-action trajectory of \eqref{e:opt prob}, then \((\embState_t)_{t=0}^{\horizon} \subset \inclusionMap(\M) \), since $\embState_{0} = \inclusionMap(\initState) \in \inclusionMap(\M)$ and $\embSysDyn$ is an extension of $\inclusionMap \circ \sysDyn$. So, the state-action trajectory \(\bigl((\state_t)_{t=0}^{\horizon}, (\conInp_{t})_{t=0}^{\horizon-1}\bigr)\) is a feasible solution of \eqref{e:opt prob} if and only if 
\(\bigl((i(\state_t))_{t=0}^{\horizon}, (\conInp_{t})_{t=0}^{\horizon-1}\bigr)\) is a feasible solution of \eqref{e:emb opt prob}. It is also straightforward to see that the cost incurred by the trajectory \(\bigl((\state_t)_{t=0}^{\horizon}, (\conInp_{t})_{t=0}^{\horizon-1}\bigr)\) is the same as that incurred by \(\bigl((i(\state_t))_{t=0}^{\horizon}, (\conInp_{t})_{t=0}^{\horizon-1}\bigr)\). Therefore, the state-action trajectory \(\bigl((\optState)_{t=0}^{\horizon}, (\optCon)_{t=0}^{\horizon-1}\bigr)\) is an optimal solution of \eqref{e:opt prob} if and only if \(\bigl((i(\optState))_{t=0}^{\horizon}, (\optCon)_{t=0}^{\horizon-1}\bigr)\) is an optimal solution of \eqref{e:emb opt prob}.

\subsection{Step 2}

\label{sec:main_proof_step2}

In this step we find first order necessary conditions satisfied by a solution of \ref{e:emb opt prob}.
To this end, we define the set \( \consSet \Let \set[\big]{\genVar \in \R^{\embSysDim} \suchthat \embSysCons(\genVar) \leq 0} \). For \( \genVar \in \consSet \) we define the \emph{active set} of indices \( \activeSet{t}{\genVar} \Let \set[\big]{i \in \upto{\consDim} \suchthat \embSysCons^{i}(\genVar) = 0} \).

\begin{definition}\label{def: regular}
Let \( \genFun : \M \rightarrow \R^{\genDim} \) be a smooth map from a smooth manifold $\M$ to $\R^{\genDim}$. We say that $\genFun$ is \textbf{regular} at $\genVar_{0} \in \M$ if the only $\mu \in \coTSpace{\genFun(\genVar_{0})}\R^{\genDim}$ satisfying the three conditions
	\begin{enumerate}[label=(\roman*), leftmargin=*, align=left, widest=iii]
		\item \( \coTLiftMap{}{\genFun}{\genVar_{0}}{\mu} = 0 \),
		\item \( \mu \geq 0 \), and
		\item \( \mu^i \genFun^i(\genVar_{0}) = 0 \text{ for all } i \in \upto{\genDim} \),
	\end{enumerate}
	is $\mu = 0$.
\end{definition}

\begin{proposition}
\label{p: constraint_tent}
	If $\embSysCons$ is regular at $\genVar_{0}$, then the closed convex cone
 	\[
		\consTent{t}{\genVar_{0}} \Let \set[\big]{\genVar \in \R^{\embSysDim} \suchthat \TLift{}{\embSysCons^{i}}{\genVar_{0}}{(\genVar - \genVar_{0})} \leq 0 \; \text{ for all }i \in \activeSet{t}{\genVar_{0}}}
	\]
	is a tent of $\consSet$ at $\genVar_{0}$. Moreover, the closed convex cone
	\[
		\consCone{t}{\genVar_{0}} \Let \set[\big]{\coTLift{}{\embSysCons}{\genVar_{0}}{\mu} \suchthat \mu \geq 0, \mu^i \embSysCons^i(\genVar_{0}) = 0 }
	\]
	is the dual cone of \( \consTent{t}{\genVar_{0}} \).
\end{proposition}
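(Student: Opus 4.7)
The plan is to address the two claims in turn, with both relying on unpacking the regularity hypothesis. Condition (iii) of Definition \ref{def: regular} (complementary slackness) forces any admissible $\mu$ to vanish on inactive indices, so regularity at $\genVar_0$ reduces to asserting that no nontrivial nonnegative combination of the linear functionals $v \mapsto \TLift{}{\embSysCons^i}{\genVar_{0}}{v}$, $i \in \activeSet{t}{\genVar_0}$, is identically zero. By Gordan's theorem of the alternative, this is equivalent to the existence of a vector $v \in \R^{\embSysDim}$ with $\TLift{}{\embSysCons^i}{\genVar_{0}}{v} < 0$ for every $i \in \activeSet{t}{\genVar_0}$ --- a Mangasarian--Fromovitz style feasible direction that will drive the tent construction.

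For the first claim I would construct $\tentMap$ by pushing every point of $\consTent$ slightly along $v$: set
\[
    \tentMap(\genVar) \Let \genVar + \sigma\bigl(\abs{\genVar - \genVar_0}\bigr)\, v,
\]
where $\sigma : [0, \infty) \to [0, \infty)$ is continuous, $\sigma(0) = 0$, $\sigma(r)/r \to 0$ as $r \to 0$, and large enough to dominate the first-order Taylor remainder of $\embSysCons$ at $\genVar_0$ uniformly across active indices. For $\genVar \in \consTent$ and $i \in \activeSet{t}{\genVar_0}$, Taylor expansion gives $\embSysCons^i(\genVar) \leq R_i(\genVar)$ with $R_i(\genVar) = o(\abs{\genVar - \genVar_0})$; with $\delta \Let \min_{i \in \activeSet{t}{\genVar_0}}\bigl(-\TLift{}{\embSysCons^i}{\genVar_{0}}{v}\bigr) > 0$ and $\sigma(r)$ chosen as a continuous nondecreasing majorant of $(2/\delta)\sup\{ R_i(\xi) : \abs{\xi - \genVar_0} \leq r,\ i \in \activeSet{t}{\genVar_0}\}$, an integral-form mean value theorem applied to $\embSysCons^i$ on the segment from $\genVar$ to $\tentMap(\genVar)$ yields $\embSysCons^i(\tentMap(\genVar)) \leq 0$ for every active $i$ and every $\genVar$ sufficiently close to $\genVar_0$ in $\consTent$. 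Inactive constraints $\embSysCons^i(\genVar_0) < 0$ are preserved by continuity, so $\tentMap(\genVar) \in \consSet$, while $\sigma(r) = o(r)$ delivers $\tentMap(\genVar) = \genVar + o(\genVar - \genVar_0)$ as required by the tent definition.

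For the second claim, observe that after translating the vertex to the origin, $\consTent$ becomes a polyhedral cone defined by the finitely many homogeneous linear inequalities indexed by $i \in \activeSet{t}{\genVar_0}$. Farkas/Minkowski--Weyl duality identifies its polar with the positive conic hull of the active gradients, $\bigl\{ \sum_{i \in \activeSet{t}{\genVar_0}} \mu^i\, d\embSysCons^i(\genVar_0) \suchthat \mu^i \geq 0 \bigr\}$. Since $\coTLift{}{\embSysCons}{\genVar_0}{\mu}$ unwinds to $\sum_i \mu^i d\embSysCons^i(\genVar_0)$, and since the slackness constraint $\mu^i \embSysCons^i(\genVar_0) = 0$ in the definition of $\consCone$ forces $\mu^i = 0$ on inactive indices, this positive hull coincides exactly with $\consCone{t}{\genVar_0}$.

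The main obstacle is the tent construction --- specifically the quantitative balance in the choice of $\sigma$, which must be small enough that $\sigma v$ is $o(\genVar - \genVar_0)$, yet large enough that it uniformly overwhelms the active-constraint Taylor remainder throughout a whole neighborhood of $\genVar_0$ in $\consTent$. Because $\embSysCons$ is only $C^1$, the remainder rate is not a priori quantitative and $\sigma$ must be defined implicitly through a continuous majorant of the remainder; once $\sigma$ is so calibrated, the inclusion $\tentMap(\genVar) \in \consSet$ follows from a standard $C^1$ integral remainder estimate along the segment from $\genVar$ to $\tentMap(\genVar)$.
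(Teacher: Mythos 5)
Your proof is correct, but it takes a genuinely different route from the paper's. The paper stays entirely inside Boltyanskii's tent calculus: it writes \(\consSet\) as the intersection of the individual sublevel sets \(\consSet^{i}\), invokes the half-space tent theorem (Theorem~\ref{th:half-space tent}) to get a tent of each \(\consSet^{i}\) (the whole space for inactive indices), observes that regularity is equivalent to inseparability of these cones, and then applies Boltyanskii's intersection-of-tents theorem to conclude that \(\consTent{t}{\genVar_{0}}\) is a tent of \(\consSet\); the dual-cone identification is delegated to Farkas' lemma exactly as you do. You instead bypass the intersection theorem entirely: you first convert regularity, via complementary slackness on the inactive indices and Gordan's theorem of the alternative, into a Mangasarian--Fromovitz-type strictly feasible direction \(v\), and then build the tent map explicitly as \(\genVar \mapsto \genVar + \sigma(\abs{\genVar - \genVar_{0}})v\) with \(\sigma\) a continuous \(o(r)\) majorant of the \(C^{1}\) Taylor remainders, which is admissible since the tent definition permits merely continuous \(\tentMap\). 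What the paper's route buys is brevity and consistency with the tent machinery already assembled in the preliminaries, at the cost of leaning on the nontrivial inseparability/intersection theorem as a black box; your route is self-contained and makes transparent why regularity is precisely the right hypothesis, and it even produces a single global tent map rather than the local tent that the intersection theorem would generically yield. Two small points you should still make explicit: the degenerate case of an empty active set (where \(\consTent{t}{\genVar_{0}} = \R^{\embSysDim}\), \(\genVar_{0}\) is interior to \(\consSet\), and the identity map is a tent, so Gordan's theorem is not needed), and the closedness of \(\consCone{t}{\genVar_{0}}\), which follows because finitely generated cones are closed (Minkowski--Weyl), matching the paper's appeal to \cite[Proposition 2.3.1]{ref:Ber-09}.
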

\begin{proof}
Define \( \consSet^{i} \Let \set[\big]{\genVar \in \R^{\embSysDim} \suchthat \embSysCons^{i}(\genVar) \leq 0}. \) Two cases arise. If $i \in \activeSet{t}{\genVar_0}$, then the closed convex cone 
\[
 \consTent{t}{\genVar_{0}}^{i} \Let \set[\big]{\genVar \in \R^{\embSysDim} \suchthat \TLift{}{\embSysCons^{i}}{\genVar_{0}}{(\genVar - \genVar_{0})} \leq 0} \text{ is a tent of $\consSet^{i}$ at $\genVar_0$. }
 \]
If $i \notin \activeSet{t}{\genVar_{0}}$, then $\genVar_{0}$ lies in the interior of $\consSet^{i}$, and therefore \( \consTent{t}{\genVar_{0}}^{i} = \R^{\embSysDim} \) is a tent of $\consSet^{i}$ at $\genVar_0$. The condition that $\embSysCons$ is regular at $\genVar_{0}$ is equivalent to \cite[Theorem 2]{ref:Bol-75} the cones $\consTent{t}{\genVar_{0}}^{i}$ being inseparable. Since $\consSet = \bigcap^{\consDim}_{i=1} \consSet^{i}$, it follows from \cite[Theorem 11]{ref:Bol-75} that \( \consTent{t}{\genVar_{0}} = \bigcap^{\consDim}_{i=1}\consTent{t}{\genVar_{0}}^{i} \) is a tent of $\consSet$ at $\genVar_{0}$, proving the first part of the claim.
\par The fact that $\consCone{t}{\genVar_{0}}$ is closed and that it is the dual cone of $\consTent{t}{\genVar_{0}}$ follows from Farkas' lemma as given in \cite[Proposition 2.3.1]{ref:Ber-09}.
\end{proof}

\par The notational conventions mentioned earlier will be used in this section also. $\partialCoTLift{\embState}{\embSysDyn}{\genVar_{0}}{\conInp_{0}}{}$ will denote the cotangent lift of the map $\embSysDyn(\cdot, \conInp_{0}): \R^{\embSysDim} \rightarrow \R^{\embSysDim}$ at $\genVar_{0}$ and $\partialCoTLift{\conInp_{}}{\embSysDyn}{\genVar_{0}}{\conInp_{0}}{}$ will denote the cotangent lift of the map $\embSysDyn(\genVar_{0}, \cdot): \R^{\conDim} \rightarrow \R^{\embSysDim}$ at $\conInp_{0}$. Similarly, $\differential{\embState}{\embCost}{\genVar_{0},\conInp_{0}}$ will denote the differential of the map $\embCost(\cdot, \conInp_{0}): \R^{\embSysDim} \rightarrow \R$ at $\genVar_{0}$ and $\differential{\conInp_{}}{\embCost}{\genVar_{0},\conInp_{0}}$ will denote the differential of the map $\embCost(\genVar_{0}, \cdot): \R^{\conDim} \rightarrow \R$ at $\conInp_{0}$. 

\begin{proposition}
\label{p: emb_pmp}
	Let \(\bigl((\embOptState)_{t=0}^{\horizon}, (\optCon)_{t=0}^{\horizon-1} \bigr)\) be an optimal state-action trajectory for \eqref{e:emb opt prob}.
		Then there exist
		\begin{itemize}[label=\(\circ\), leftmargin=*]
			\item a trajectory \(\bigl(\embAdjDyn \bigr)_{t=1}^{\horizon} \subset \coTBundle{\R^{\embSysDim}}\) such that \( \embAdjDyn \in \coTSpace{\embOptState}\R^{\embSysDim} \),
			\item a sequence \(\bigl(\embAdjState \bigr)_{t=1}^{\horizon} \) such that \(\embAdjState \in \dualSpace{\bigl( \R^{\consDim[t]} \bigr)} \), and
			\item a pair \(\bigl(\embAdjCost, \embAdjFreq \bigr) \in \R \times \dualSpace{\big(\R^{\freqDim}\bigr)}\),
		\end{itemize}
		satisfying the following conditions:
		\begin{enumerate}[label={\textup{(EPMP-\roman*)}}, leftmargin=*, align=left, widest=iii]
			\item \label{euc_pmp:non-negativity} non-negativity condition
				\begin{quote}
					\(\embAdjCost \ge 0, \bigl(\embAdjState \bigr)_{t=1}^{\horizon}  \ge 0  ;\)
				\end{quote}
			\item \label{euc_pmp:non-triviality} non-triviality condition
				\begin{quote}
					the multipliers \( \bigl(\embAdjState\bigr)_{t=1}^{\horizon}\) and the pair \( \bigl(\embAdjCost, \embAdjFreq \bigr) \) do not simultaneously vanish;
				\end{quote}
			\item \label{euc_pmp:optimal dynamics} state and adjoint system dynamics
				\begin{alignat*}{2}
					\embOptState[t+1] & = \embSysDyn(\embOptState, \optCon) \quad \text{ for } t = 0, \ldots, \horizon-1, \\
					\embAdjDyn[t] & = \partialCoTLift{\embState}{\embSysDyn}{\embOptState}{\optCon}{\embAdjDyn[t+1]} - \embAdjCost \differential{\embState}{\embCost}{\embOptState,\optCon} - \coTLift{}{\embSysCons}{\embOptState}{\embAdjState} \quad \text{ for } t = 1, \ldots, \horizon-1;
				\end{alignat*}
			\item \label{euc_pmp:transversality} transversality conditions
				\begin{align*}
					\embAdjDyn[\horizon] = -\embAdjCost \differential{}{\embCost[\horizon]}{\embOptState[\horizon]} - \coTLift{}{\embSysCons[\horizon]}{\embOptState[\horizon]}{\embAdjState[\horizon]} ;
				\end{align*}
			\item \label{euc_pmp:Hamiltonian VI} Hamiltonian maximization condition, point-wise in time,
				\begin{align*}
                    \inprod{\partialCoTLift{\conInp{}}{\embSysDyn}{\embOptState}{\optCon}{\embAdjDyn[t+1]} - \adjCost \differential{\conInp{}}{\embCost}{\embOptState,\optCon}+ \freqDer^{T}\embAdjFreq}{\perturb{\conInp_t}} \le 0 
				\end{align*}
				whenever \(\optCon + \perturb{\conInp}_{t} \in \conTent ( \optCon ) \), where \(\conTent(\optCon)\) is a local tent at \(\optCon\) of the set \(\admControl_{t}\) of admissible actions;
				\item \label{euc_pmp:complementary slackness} complementary slackness
				\begin{align*}
				(\embAdjState)^{j}\embSysCons^{j}(\embOptState) = 0 \quad \text{ for all } j \in 1,2,\ldots,\consDim.
				\end{align*}
		\end{enumerate}
\end{proposition}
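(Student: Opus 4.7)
The plan is to apply the Euclidean discrete-time PMP of \cite{ref:ParCha-17} to problem \eqref{e:emb opt prob}, enriched with the tent calculus needed to absorb the pointwise state constraints. Observe that \eqref{e:emb opt prob} is a finite-dimensional discrete-time optimal control problem on $\R^{\embSysDim}$ with smooth dynamics $\embSysDyn$, smooth stage and terminal costs $\embCost$ and $\embCost[\horizon]$, an abstract control constraint set $\admControl_t$, and the linear frequency equality $\sum_t \freqDer \conInp_t = 0$ — exactly the setting covered by \cite{ref:ParCha-17} — supplemented only by the smooth pointwise state-inequality constraints $\embSysCons[t](\embState_t)\le 0$.

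For these state constraints, \autoref{p: constraint_tent} already supplies, at each stage $t$, a closed convex tent $\consTent{t}{\embOptState}$ of the feasible set $\consSet=\{\embSysCons[t]\le 0\}$ at $\embOptState$, together with the identification of its dual cone as $\consCone{t}{\embOptState}$. With these tents in hand I would carry out Boltyanskii's needle-variation argument exactly as in \cite{ref:Bol-75}: at each stage build an approximating cone from (i) the cost-decrease half-space furnished by \autoref{th:half-space tent} applied to the total cost, (ii) the state-constraint tent $\consTent{t}{\embOptState}$, (iii) the control-constraint tent $\conTent(\optCon)$ at $\optCon$, and (iv) the linear subspace cut out by the frequency equality; propagate each cone backward in time through the linearisation of $\embSysDyn$; and invoke Boltyanskii's separation theorem for inseparable tents on the resulting family. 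A non-trivial separating functional exists because, were the cones not separable in this sense, a feasible cost-decreasing perturbation of the optimal state-action trajectory would be constructible, contradicting optimality. Decomposing this separating functional through Farkas' lemma — using precisely the dual-cone identity of \autoref{p: constraint_tent} — produces the multipliers $\embAdjCost\ge 0$, $(\embAdjState)_{t=1}^{\horizon}\ge 0$, and $\embAdjFreq$ of \ref{euc_pmp:non-negativity}--\ref{euc_pmp:non-triviality}, with the complementary slackness \ref{euc_pmp:complementary slackness} built into the very definition of $\consCone{t}{\embOptState}$. Backward propagation of the separating functional through the transpose of the linearisation of $\embSysDyn$ yields the adjoint recursion \ref{euc_pmp:optimal dynamics} with terminal condition \ref{euc_pmp:transversality}, and the restriction of the separation inequality to admissible control perturbations $\perturb{\conInp}_t$ with $\optCon + \perturb{\conInp}_t\in\conTent(\optCon)$ gives the Hamiltonian non-positivity \ref{euc_pmp:Hamiltonian VI}.

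The main obstacle I foresee is the joint treatment of the linear frequency equality together with the several inequality tents while keeping the non-triviality condition sharp. Because \ref{euc_pmp:optimal dynamics} and \ref{euc_pmp:transversality} express the adjoint trajectory $(\embAdjDyn)_{t=1}^{\horizon}$ as a linear function of the multipliers $\bigl((\embAdjState)_{t=1}^{\horizon},\embAdjCost,\embAdjFreq\bigr)$, the phrasing of \ref{euc_pmp:non-triviality} in terms of these multipliers alone is strictly stronger than the more familiar form involving the adjoint trajectory, and the tent-based separation argument must be set up to deliver exactly this stronger form; this is where the inseparability hypothesis on the constraint tents (i.e.\ the regularity of $\embSysCons[t]$ at $\embOptState$, built into \autoref{p: constraint_tent}) is indispensable. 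All remaining ingredients — the extraction of the frequency multiplier from the transpose of $\freqConstr$, the routine backward dynamic programming of the covector, and the localisation of the Hamiltonian inequality to the cone $\conTent(\optCon)$ — are standard bookkeeping within the ParCha-17 framework with the state-constraint tents of \autoref{p: constraint_tent} appended.
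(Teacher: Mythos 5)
Your overall route---absorb the pointwise state constraints through the tents and dual cones of Proposition~\ref{p: constraint_tent} and then run the Euclidean separation machinery of \cite{ref:Bol-75}---is the same reduction the paper performs, except that the paper simply quotes \cite[Proposition C.6]{ref:ParCha-17} as a black box at that point instead of re-deriving the needle-variation/separation argument; that difference is one of economy rather than substance. The genuine gap is that your argument only functions when every \(\embSysCons[t]\) is regular at \(\embOptState[t]\): Proposition~\ref{p: constraint_tent} produces the tent \(\consTent{t}{\embOptState}\) \emph{only under the regularity hypothesis} (regularity is what makes the individual half-space cones inseparable, so that their intersection is again a tent by \cite[Theorem 11]{ref:Bol-75}), whereas the statement of the proposition carries no such hypothesis. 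You must therefore treat separately the case in which some \(\embSysCons[s]\) fails to be regular at \(\embOptState[s]\); there your tent construction is simply unavailable. The paper disposes of this case in one line: non-regularity furnishes a nonzero \(\mu \geq 0\) with \(\coTLiftMap{}{\embSysCons[s]}{\embOptState[s]}{\mu} = 0\) and \(\mu^i \embSysCons[s]^i(\embOptState[s]) = 0\), and taking \(\embAdjState[s] = \mu\) with every other multiplier, the pair \(\bigl(\embAdjCost, \embAdjFreq\bigr)\), and the adjoint trajectory set to zero satisfies \ref{euc_pmp:non-negativity}--\ref{euc_pmp:complementary slackness} trivially. Without this case split your proof is incomplete.

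A secondary point: your claim that the non-triviality condition \ref{euc_pmp:non-triviality}, phrased in terms of the multipliers alone, is \emph{strictly stronger} than the familiar form involving the adjoint trajectory is backwards. Since \ref{euc_pmp:optimal dynamics} and \ref{euc_pmp:transversality} determine \(\bigl(\embAdjDyn\bigr)_{t=1}^{\horizon}\) linearly from \(\bigl(\embAdjState\bigr)_{t=1}^{\horizon}\) and \(\embAdjCost\), simultaneous vanishing of the multipliers forces the adjoint trajectory to vanish as well; hence non-vanishing of the adjoint data implies non-vanishing of the multipliers, and \ref{euc_pmp:non-triviality} is the \emph{weaker} assertion (the paper makes exactly this observation in its remarks). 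This works in your favour: you do not need to engineer the separation argument to deliver a sharper conclusion, because \ref{euc_pmp:non-triviality} follows from the standard non-triviality of \cite[Proposition C.6]{ref:ParCha-17} by precisely this backward-propagation argument, which is how the paper proves it.
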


\begin{proof}
	Suppose \( \embSysCons[s] \) is not regular at \( \embOptState[s] \) for some $s$. Then there exists $\mu \in \dualSpace{\bigl( \R^{\consDim[s]} \bigr)} $ such that \( \mu \neq 0, \coTLiftMap{}{\embSysCons[s]}{\embOptState[s]}{\mu} = 0, \mu \geq 0, \mu^i \embSysCons[s]^i(\embOptState[s]) = 0 \). We can now take $\embAdjState[s] = \mu, \embAdjState = 0$ for all $t \neq s, \embAdjCost = 0, \embAdjFreq = 0, \embAdjDyn = 0$ and the conditions of Proposition \ref{p: emb_pmp} hold trivially.
	\par If not, we can say that for all \(t \in \upto{\horizon}, \embSysCons[t] \) is regular at \( \embOptState[t] \). From Proposition \ref{p: constraint_tent} we know that the set 
	\[
		\consCone{t}{\genVar_{0}} \Let \set[\big]{\coTLift{}{\embSysCons}{\genVar_{0}}{\mu_{t}} \suchthat \mu_t^{i} \geq 0, \mu_t^{i}\embSysCons^{i}(\genVar_{0}) = 0 }
	\]
	is the dual cone of a tent to the set \( \consSet = \set[\big]{\genVar \in \R^{\embSysDim} \suchthat \embSysCons(\genVar) \leq 0} \). It follows now that Proposition \ref{p: emb_pmp} is just a restatement of \cite[Proposition C.6]{ref:ParCha-17} except for the condition \ref{euc_pmp:non-triviality}. Suppose the multipliers \( \bigl(\embAdjState\bigr)_{t=1}^{\horizon}\) and the pair \( \bigl(\embAdjCost, \embAdjFreq \bigr) \) vanish simultaneously, then from the transversality condition \ref{euc_pmp:transversality}, 
	\begin{align*}
	\embAdjDyn[\horizon] & = -\embAdjCost \differential{}{\embCost[\horizon]}{\embOptState[\horizon]} - \coTLift{}{\embSysCons[\horizon]}{\embOptState[\horizon]}{\embAdjState[\horizon]} = 0,
	\end{align*}
	and from the adjoint dynamics \ref{euc_pmp:optimal dynamics},
	\begin{align*}
	\embAdjDyn[t] & = \partialCoTLift{\embState}{\embSysDyn}{\embOptState}{\optCon}{\embAdjDyn[t+1]} - \embAdjCost \differential{\embState}{\embCost}{\embOptState,\optCon} - \coTLift{}{\embSysCons}{\embOptState}{\embAdjState} \\
	& = \partialCoTLift{\embState}{\embSysDyn}{\embOptState}{\optCon}{\embAdjDyn[t+1]} \quad \text{ for } t = 1, \ldots, \horizon-1.
	\end{align*}
	It follows that the trajectory \( \bigl(\embAdjDyn\bigr)_{t=1}^{\horizon}\) also vanishes. This contradicts the non-triviality condition given in \cite[Proposition C.6]{ref:ParCha-17}, and proves \ref{euc_pmp:non-triviality}.
\end{proof}

\subsection{Step 3}
\label{sec:main_proof_step3}

The necessary conditions we arrived at in Proposition \ref{p: emb_pmp} depends on both the particular embedding of $i : \M \mapsto \R^{\embSysDim}$ and the extensions \( \bigl( \embSysCons \bigr)_{t=1}^{\horizon}, \bigl( \embSysDyn \bigr)_{t=0}^{\horizon-1}, \bigl( \embCost \bigr)_{t=0}^{\horizon} \); this isn't desirable. In this step we finally arrive at the conditions in Theorem \ref{th:main pmp} from the conditions in Proposition \ref{p: emb_pmp}.

\begin{proof}[Proof of Theorem \ref{th:main pmp}]
Define \( \adjDyn \Let \coTLift{}{\inclusionMap}{\optState}{\embAdjDyn}, \adjState \Let \embAdjState, \adjCost = \embAdjCost \). Then 
\begin{align*}
\adjDyn[t] &= \coTLift{}{\inclusionMap}{\optState}{\embAdjDyn[t]} \\
&=\coTLiftMap{}{\inclusionMap}{\optState}\bigl(\partialCoTLift{\embState}{\embSysDyn}{\embOptState}{\optCon}{\embAdjDyn[t+1]} - \embAdjCost \differential{\embState}{\embCost}{\embOptState, \optCon}  - \coTLift{}{\embSysCons}{\embOptState}{\embAdjState}\bigr) \\
&= \partialCoTLift{\state}{(\embSysDyn \circ \inclusionMap)}{\optState}{\optCon}{\embAdjDyn[t+1]} - \embAdjCost \differential{\state}{(\embCost \circ \inclusionMap)}{\embOptState, \optCon} - \coTLift{}{(\embSysCons \circ \inclusionMap)}{\optState}{\adjState}.
\end{align*}
Since $\embCost$ and $\embSysCons$ are extensions of $\cost$ and $\sysCons$ respectively, \( \embCost \circ \inclusionMap = \cost, \embSysCons \circ \inclusionMap = \sysCons, \embSysDyn \circ \inclusionMap = \inclusionMap \circ \sysDyn \). Also, $\adjState = \embAdjState, \adjCost = \embAdjCost$ by definition. Therefore,
\begin{align*}
\adjDyn[t] &= \partialCoTLift{\state}{(\inclusionMap \circ \sysDyn)}{\optState}{\optCon}{\embAdjDyn[t+1]} - \adjCost \differential{\state}{\cost}{\embOptState, \optCon} - \coTLift{}{\sysCons}{\optState}{\adjState} \\
 &= \partialCoTLift{\state}{\sysDyn}{\optState}{\optCon}{\coTLift{\state}{\inclusionMap}{\optState[t+1]}{\embAdjDyn[t+1]}} - \adjCost \differential{\state}{\cost}{\embOptState, \optCon} - \coTLift{}{\sysCons}{\optState}{\adjState}.
\end{align*}
We now conclude that
\begin{align*}
\adjDyn[t] &= \partialCoTLift{\state}{\sysDyn}{\optState}{\optCon}{\adjDyn[t+1]} - \adjCost \differential{\state}{\cost}{\optState, \optCon}- \coTLift{}{\sysCons}{\optState}{\adjState}, \\
\adjDyn[\horizon] &= -\adjCost \differential{}{\cost[\horizon]}{\optState[\horizon], \optCon[\horizon]} - \coTLift{}{\sysCons[\horizon]}{\optState[\horizon]}{\adjState[\horizon]}.
\end{align*}
This proves \ref{pmp:optimal dynamics} and \ref{pmp:transversality}. Since \( \embCost(\embOptState, u) = \cost(\optState, u)\) for all \(u \in \R^{\conDim} \),
\begin{align*}
\differential{\conInp{}}{\embCost}{\embOptState, u} = \differential{\conInp{}}{\cost}{\optState, u}. 
\end{align*}
Since \( \inclusionMap \circ \sysDyn(\optState, u) = \embSysDyn(\embOptState, u)\) for all \(u \in \R^{\conDim} \),
\begin{align*}
\partialCoTLift{\conInp{}}{\embSysDyn}{\embOptState}{\optCon}{\embAdjDyn} &= \partialCoTLift{\conInp{}}{(\inclusionMap \circ \sysDyn)}{\optState}{\optCon}{\embAdjDyn} \\
&= \partialCoTLift{\conInp{}}{\sysDyn}{\optState}{\optCon}{\coTLift{\state}{\inclusionMap}{\optState}{\embAdjDyn}} \\
&= \partialCoTLift{\conInp{}}{\sysDyn}{\optState}{\optCon}{\adjDyn}.
\end{align*}
Therefore,
\begin{multline*}
\inprod{\partialCoTLift{\conInp{}}{\sysDyn}{\optState}{\optCon}{\adjDyn} - \adjCost \differential{\conInp{}}{\cost}{\optState,\optCon} + \freqDer^{T}\adjFreq }{\perturb{\conInp}} = \\
\inprod{\partialCoTLift{\conInp{}}{\embSysDyn}{\embOptState}{\optCon}{\embAdjDyn} - \embAdjCost \differential{\conInp{}}{\embCost}{\embOptState,\optCon} + \freqDer^{T}\embAdjFreq }{\perturb{\conInp}} \le 0
\end{multline*}
whenever \( \optCon + \perturb{\conInp}_{t} \in \conTent ( \optCon ) \), where \(\conTent(\optCon)\) is a local tent at \(\optCon\) of the set \(\admControl_{t}\) of admissible actions. This proves \ref{pmp:Hamiltonian VI}. \ref{pmp:non-negativity}, \ref{pmp:non-triviality}, and \ref{pmp:complementary slackness} are just restatements of \ref{euc_pmp:non-negativity}, \ref{euc_pmp:non-triviality}, and \ref{euc_pmp:complementary slackness}.
\end{proof}


\begin{thebibliography}{PBGM62}

\bibitem[AS04]{ref:AgrSac-04}
A.~Agrachev and Y.~L. Sachkov, \emph{Control {T}heory from the {G}eometric
  {V}iewpoint}, Springer-Verlag, Berlin, 2004.

\bibitem[Ber09]{ref:Ber-09}
D.~P. Bertsekas, \emph{Convex {O}ptimization {T}heory}, Athena Scientific,
  Belmont, 2009.

\bibitem[Bol71]{ref:Bol-71}
V.~G. Boltyanskii, \emph{Mathematical {M}ethods of {O}ptimal {C}ontrol}, Holt,
  Rinehart and Winston, Inc., New York-Montreal, Que.-London, 1971, Translated
  from the Russian by K. N. Trirogoff. Edited by Ivin Tarnove,
  Balskrishnan-Neustadt Series.

\bibitem[Bol75]{ref:Bol-75}
\bysame, \emph{The method of tents in the theory of extremal problems}, Russian
  Mathematical Surveys \textbf{30} (1975), no.~3, 1--54.

\bibitem[Bol78]{ref:Bol-78}
\bysame, \emph{Optimal {C}ontrol of {D}iscrete {S}ystems}, John Wiley \& Sons,
  1978.

\bibitem[Cha11]{ref:Cha-11}
D.~E. Chang, \emph{A simple proof of the {P}ontryagin maximum principle on
  manifolds}, Automatica \textbf{47} (2011), 630--633.

\bibitem[KG17]{ref:KipGup-17}
Robert Kipka and Rohit Gupta, \emph{The discrete-time geometric maximum
  principle}, \url{https://arxiv.org/abs/1707.03873}, 2017.

\bibitem[Lee13]{ref:Lee-13}
J.~M. Lee, \emph{Introduction to {S}mooth {M}anifolds}, 2nd ed., Springer, New
  York, 2013.

\bibitem[LM67]{ref:LeeMar-67}
E.~B. Lee and L.~Markus, \emph{Foundations of {O}ptimal {C}ontrol {T}heory},
  John Wiley \& Sons, 1967.

\bibitem[PBGM62]{ref:PonBolGamMis-62}
L.~S. Pontryagin, V.~G. Boltyanskii, R.~V. Gamkrelidze, and E.~F. Mishchenko,
  \emph{The {M}athematical {T}heory of {O}ptimal {P}rocesses}, John Wiley \&
  Sons, 1962.

\bibitem[PC19]{ref:ParCha-17}
P.~Paruchuri and D.~Chatterjee, \emph{Discrete time {P}ontryagin maximum
  principle for optimal control problems under state-action-frequency
  constraints}, IEEE Transactions on Automatic Control \textbf{64} (2019),
  no.~9, doi: \url{https://doi.org/10.1109/TAC.2019.2893160}.

\bibitem[PCB18]{ref:PhoChaBan-18}
K.~S. Phogat, D.~Chatterjee, and R.~Banavar, \emph{Discrete-time {P}ontryagin
  maximum principle on matrix {L}ie groups}, Automatica \textbf{97} (2018),
  376--391.

\bibitem[SS03]{Stein-Shakarchi}
E.~M. Stein and R.~Shakarchi, \emph{Fourier {A}nalysis, an {I}ntroduction},
  Princeton University Press, Princeton, NJ, 2003.

\bibitem[Sus98]{ref:Sus-98}
H.~J. Sussmann, \emph{An introduction to the coordinate-free maximum
  principle}, Geometry of {F}eedback and {O}ptimal {C}ontrol, Monographs and
  Textbooks in Pure and Applied Mathematics, vol. 207, Dekker, New York, 1998,
  pp.~463--557.

\bibitem[Tr{\'e}12]{ref:Tre-12}
E.~Tr{\'e}lat, \emph{Optimal control and applications to aerospace: some
  results and challenges}, Journal of Optimization Theory and Applications
  \textbf{154} (2012), no.~3, 713--758.

\end{thebibliography}
	\providecommand{\bysame}{\leavevmode\hbox to3em{\hrulefill}\thinspace}
\providecommand{\MR}{\relax\ifhmode\unskip\space\fi MR }
\providecommand{\MRhref}[2]{%
  \href{http://www.ams.org/mathscinet-getitem?mr=#1}{#2}
}
\providecommand{\href}[2]{#2}

	\bigskip

\end{document}